\crefname{subsection}{Subsection}{Subsections}
\crefname{subsubsection}{Subsubsection}{Subsubsections}
\theoremstyle{definition}
\newtheorem{theorem}{Theorem}[section]
\newtheorem{defn}[theorem]{Definition}
\newtheorem{ex}[theorem]{Example}
\newtheorem{cor}[theorem]{Corollary}
\newtheorem{lemma}[theorem]{Lemma}
\newtheorem{prop}[theorem]{Proposition}
\newtheorem{rmk}[theorem]{Remark}
\newtheorem*{rmk*}{Remark}
\newtheorem*{ex*}{Example}
\newtheorem*{theorem*}{Theorem}
\newtheorem*{defn*}{Definition}
\newcommand{\bbZ}{\mathbb{Z}}
\newcommand{\bbE}{\mathbb{E}}
\newcommand{\bbR}{\mathbb{R}}
\newcommand{\bbC}{\mathbb{C}}
\newcommand{\Sp}{\mathcal{S}\mathrm{p}}
\newcommand{\Ext}{\operatorname{Ext}}
\renewcommand{\ker}{\operatorname{Ker}}
\newcommand{\Sq}{\mathrm{Sq}}
\newcommand{\tr}{\operatorname{tr}}
\newcommand{\TMF}{\operatorname{TMF}}
\newcommand{\kappabar}{\ol{\kappa}}
\newcommand{\h}{\mathrm{h}}
\newcommand{\bs}{{-}}
\newcommand{\ol}{\overline}
\newcommand\xqed[1]{%
  \leavevmode\unskip\penalty9999 \hbox{}\nobreak\hfill
  \quad\hbox{#1}}
\newcommand\tqed{\xqed{$\triangleleft$}}
\DeclareRobustCommand{\tvdots}{%
  \vbox{\baselineskip4\p@\lineskiplimit\z@\kern0\p@\hbox{.}\hbox{.}\hbox{.}}}
\begin{document}

\title[The Real-oriented cohomology of infinite stunted projective spaces]{The Real-oriented cohomology \\
 of infinite stunted projective spaces}

\author{William Balderrama}
%\date{\today}

\subjclass[2020]{
55N20, %Generalized (extraordinary) homology and cohomology theories in algebraic topology
55N22, %Bordism and cobordism theories and formal group laws in algebraic topology
55N91, %Equivariant homology and cohomology in algebraic topology
55Q51. % v n -periodicity
}

\begin{abstract}
Let $E\mathbb{R}$ be an even-periodic Real Landweber exact $C_2$-spectrum, and $ER$ be its spectrum of fixed points. We compute the $ER$-cohomology of the infinite stunted real projective spectra $P_j$. These cohomology groups combine to form the $RO(C_2)$-graded coefficient ring of the $C_2$-spectrum $b(ER) = F(EC_{2+},i_\ast ER)$, which we show is related to $E\mathbb{R}$ by a cofiber sequence $\Sigma^\sigma b(ER)\rightarrow b(ER)\rightarrow E\mathbb{R}$. We illustrate our description of $\pi_\star b(ER)$ with the computation of some $ER$-based Mahowald invariants.
\end{abstract}

\maketitle

%\tableofcontents

\section{Introduction}

The spectrum $MU$ of complex cobordism plays a central role in both our conceptual and computational understanding of stable homotopy theory. In \cite{landweber1968conjugations}, Landweber introduced what is now known as the $C_2$-equivariant spectrum $M\bbR$ of Real bordism, with underlying spectrum $MU$ and fixed points $MR = MU^{\h C_2}$ the homotopy fixed points for the action of $C_2$ on $MU$ by complex conjugation. Work of Araki \cite{araki1979orientations}, Hu--Kriz \cite{hukriz2001real}, and others, has shown that essentially all of the theory of complex-oriented homotopy theory may be carried out in the $C_2$-equivariant setting with $M\bbR$ in place of $MU$, leading to the rich subject of Real-oriented homotopy theory. This subject has seen extensive study over the past two decades, with a notable increase in interest following Hill--Hopkins--Ravenel's use of $M\bbR$ to resolve the Kervaire invariant one problem \cite{hillhopkinsravenel2016nonexistence}.

There are Real analogues of most familiar complex-oriented cohomology theories. An important family of examples is given by the Real Johnson--Wilson theories $E\bbR(n)$, refining the usual Johnson--Wilson theories $E(n)$. These theories are Landweber flat over $M\bbR$, in the sense that they are $M\bbR$-modules and satisfy
\[
E\bbR(n)_\star X \cong E\bbR(n)_\star \otimes_{M\bbR_\star} M\bbR_\star X
\]
for any $C_2$-spectrum $X$. The fixed points $ER(n) = E\bbR(n)^{C_2} = E(n)^{\h C_2}$ are nonequivariant cohomology theories that are interesting in their own right; for example, $ER(1) \simeq KO_{(2)}$, and $ER(2)$ is a variant of $\TMF_0(3)_{(2)}$. One may regard the descent from $E(n)$ to $ER(n)$ as encoding a portion of the $E(n)$-based Adams--Novikov spectral sequence, and accordingly each $ER(n)$ detects infinite families in $\pi_\ast S$.

There is in general a tradeoff between the richness of a homology theory and the ease with which it may be computed. Kitchloo, Lorman, and Wilson have carried out extensive computations with $ER(n)$-theory \cite{kitchloowilson2007hopf, kitchloowilson2015ern, lorman2016real, kitchloolormanwilson2017landweber, kitchloolormanwilson2018er2}, and their program has shown that these theories strike a very pleasant balance between richness and computability. Computations of $ER(2)^\ast \bbR P^n$ in particular have been applied to the nonimmersion problem for real projective spaces, with computations for $n = 2k$ in \cite{kitchloowilson2008second}, $n = 16k+1$ in \cite{kitchloowilson2008secondii}, and $n = 16k+9$ by Banerjee in \cite{banerjee2010real}.

This paper contributes to the above story. Let $E\bbR$ be a Real Landweber exact $C_2$-spectrum in the sense of Hill--Meier \cite[Section 3.2]{hillmeier2017c2}; we take this to include the assumption that $E\bbR$ is strongly even. Write $E$ for the underlying spectrum of $E\bbR$ and $ER = E\bbR^{C_2} = E^{\h C_2}$ for its fixed points. Suppose moreover that $E\bbR$ is even-periodic, in the sense that $\pi_{1+\sigma}E\bbR$ contains a unit. This is equivalent to asking that the $M\bbR$-orientation of $E\bbR$ extends to an $MP\bbR$-orientation, where 
\[
MP\bbR\simeq \bigoplus_{n\in\bbZ}\Sigma^{n(1+\sigma)}M\bbR
\]
is the Real analogue of $2$-periodic complex cobordism.

The primary goal of this paper is to compute the $ER$-cohomology of the infinite stunted projective spectra $P_j$. When $j > 0$, these are the spaces
\[
P_j = \bbR P^\infty / \bbR P^{j-1};
\]
in general, $P_j$ is the Thom spectrum of $j\sigma$, where $\sigma$ is the sign representation of $C_2$ regarded as a vector bundle over $BC_2 = \bbR P^\infty$. The cohomology $ER^\ast P_\ast$ is of interest for at least a few reasons: first, it is one long exact sequence away from the groups $ER^\ast \bbR P^j$, which have so far only been studied at heights $\leq 2$; second, there are $C_2$-equivariant Hurewicz maps $\pi_{c+w\sigma}S_{C_2}\rightarrow ER^{-c}P_w$, which are at least as nontrivial as the nonequivariant Hurewicz maps for $ER$; third, there is an interesting interplay between the $C_2$ appearing in $ER\simeq E^{\h C_2}$ and the $C_2$ appearing in $ER^\ast (P_w) \simeq ER^\ast (S^{w\sigma}_{\h C_2})$ which sheds some light on the nature of the $C_2$-spectrum $E\bbR$.

We record the basic properties of $ER$ in \cref{sec:even}. In particular, $\pi_0 ER \cong \pi_0 E$, the torsion in $\pi_\ast ER$ is supported on a single class $x\in \pi_1 ER$, there is a cofiber sequence $\Sigma ER\xrightarrow{x}ER\rightarrow E$, and the $x$-Bockstein spectral sequence for $ER$-cohomology agrees with the homotopy fixed point spectral sequence (HFPSS) from the $E_2$-page on.

Write $b(ER) = F(EC_{2+},i_\ast ER)$ for the Borel $C_2$-spectrum on $ER$ with trivial $C_2$-action. This satisfies $\pi_{c+w\sigma}b(ER) = ER^{-c}P_w$, and we shall compute $ER^\ast P_\ast$ using the $x$-Bockstein spectral sequence
\[
\pi_\star b(E)[x]\Rightarrow \pi_\star b(ER).
\]
This concludes an investigation we began in \cite{balderrama2021c2}. There, we computed the HFPSS $H^\ast(C_2;\pi_\star b(KU_2^\wedge))\Rightarrow\pi_\star b(KO_2^\wedge)$ as a step in our description of the $C_2$-equivariant $K(1)$-local sphere. At the time, we were able to put the $E_2$-page into a more general context by computing $H^\ast(C_2;\pi_\star b(E))$ for more general even-periodic Landweber exact spectra $E$, but had no information about possible higher differentials. In this paper, we carry out the rest of the computation for Real-oriented $E$. The results are summarized in \cref{sec:summary} below.

\begin{rmk}
The reader may observe that by restricting to even-periodic spectra, we have ruled out the real Johnson--Wilson theories $ER(n)$ for $n \geq 2$. However, any Real Landweber exact $C_2$-spectrum $E\bbR$ is a summand of the even-periodic theory $\bigoplus_{n\in\bbZ}\Sigma^{n(1+\sigma)}E\bbR$, so no real information has been lost. A more subtle point is that implicit in the definition of Real Landweber exactness is the assumption that $E\bbR$ is a ring up to homotopy, and it is not known whether $E\bbR(n)$ always satisfies this. However, the partial multiplicative structure given in \cite{kitchloolormanwilson2018multiplicative} is sufficient for our computation to apply to $2$-periodic $ER(n)$-theory.
\tqed
\end{rmk}

\subsection*{Acknowledgments}

We thank Hood Chatham for an enlightening conversation highlighting the role of Borel completeness in \cref{thm:xicofib}.

\section{Summary}\label{sec:summary}

We now describe our results. We start with the following, which serves as the linchpin for our computation of $\pi_\star b(ER)$. Write $\rho\in\pi_{-\sigma}S_{C_2}$ for the Euler class of the sign representation and $\tau^{-2}\in \pi_{2\sigma-2}b(E)$ for the Thom class of $2\sigma = \bbC \otimes \sigma$. These classes are sometimes denoted $a_\sigma$ and $u_{\sigma}^{-2}$, but we will reserve those symbols for $E\bbR$ and $C_{2+}\otimes i_\ast ER$.
Write $u\in \pi_2 E$ for the chosen unit, and set
\[
\xi = \rho\tau^{-2} u \in \pi_\sigma b(E).
\]

\begin{theorem}[\cref{sec:compare}]\label{thm:xicofib}
The class $\xi$ is a permanent cycle in the $x$-Bockstein spectral sequence, detecting a lift of $x$. Moreover, there is a cofiber sequence
\begin{equation}\label{eq:wood}\begin{tikzcd}
\Sigma^\sigma b(ER)\ar[r,"\xi"]&b(ER)\ar[r]&E\bbR
\end{tikzcd}\end{equation}
of $C_2$-spectra.
\tqed
\end{theorem}

This cofiber sequence is a twisted form of the standard cofiber sequence
\begin{equation}\label{eq:standard}\begin{tikzcd}
\Sigma^{-\sigma}b(ER)\ar[r,"\rho"]&b(ER)\ar[r]&C_{2+}\otimes i_\ast ER
\end{tikzcd}.\end{equation}

\begin{ex}
When $E = KU$, one can identify $b(ER) = F(EC_{2+},KO_{C_2})$ and $E\bbR = K\bbR$, and $\xi = \pm \eta_{C_2}$ is the $C_2$-equivariant Hopf map. In this case, \cref{thm:xicofib} recovers the Real Wood cofibering $KO_{C_2}/(\eta_{C_2})\simeq K\bbR$ (cf.\ \cite[Proposition 10.13]{guillouhillisaksenravenel2020cohomology}).
\tqed
\end{ex}

To show that $\xi$ is a permanent cycle detecting a lift of $x$, we first reduce to the universal case $E = MP$, then show that this is the only possibility compatible with norms on $b(MPR)$. Given this, the cofiber sequence of \cref{eq:wood} is a mostly formal consequence of \cref{eq:standard} and the fact that $\xi$ differs from $\rho$ by a unit in $\pi_\star b(E)$.

We now describe $\pi_\star b(ER)$. We start by fixing some notation for $\pi_\star b(E)$. Write $[2](z)\in E_0[[z]]$ for the $2$-series of the formal group law of $E$, and write $u_n \in E_0$ for the elements corresponding to the usual $v_n \in \pi_{2(2^n-1)}E$ by $u_n = u^{-(2^n-1)}v_n$. We may find series $h_n(z) \in E_0[[z]]$ for $n\geq 0$, of the form $h_n(z) = u_n + O(z)$ and satisfying
\[
[2](z) = z h_0(z),\qquad h_n(z)\equiv u_n + z^{2^n}h_{n+1}(z)\pmod{u_0,\ldots,u_{n-1}};
\]
Note in particular
\[
[2](z) \equiv z^{2^n}h_n(z)\pmod{u_0,\ldots,u_{n-1}}.
\]
We now specialize to $\pi_\star b(E)$. Set
\[
z = \rho \xi = \rho^2\tau^{-2} u,\qquad h_n = h_n(z),\qquad w_n = \rho^{2^{n+1}}h_{n+1}\equiv \tau^{2^{n+1}}u^{-2^n}(h_n-u_n),
\]
the last congruence being modulo $(u_0,\ldots,u_{n-1})$. We abbreviate $h = h_0$. This is the transfer element in $\pi_0 b(E) = E^0 BC_{2}$, and we have
\[
\pi_0 b(E) = \frac{E_0[[z]]}{([2](z))},\qquad \pi_\star b(E) = \frac{E_0[\rho,\tau^{\pm 2},u^{\pm 1}]_\rho^\wedge}{(\rho\cdot h)},
\]
see for instance \cite[Section 2.1]{balderrama2021c2}.
\begin{theorem}[\cref{sec:bock}]\label{thm:ber}
Define the subring $Z\subset\pi_\star b(E)$ by
\[
Z = E_0(\rho,\xi,\tau^{2^{n+2}l}u^{2^{n+1}k}u_n, \tau^{2^{n+1}(2l+1)}u^{2^{n+1}k}h_n:n\geq 0;k,l\in\bbZ)_{(\rho,\xi)}^\wedge\subset \pi_\star b(E),
\]
and let $B\subset Z[x]$ be the ideal generated by the elements
\begin{gather*}
\tau^{2^{n+2}l}u^{2^{n+1}k}u_n\cdot x^{2^{n+1}-1},\qquad \tau^{2^{n+1}(2l+1)}u^{2^{n+1}k}h_n\cdot x^{2^{n+1}-1},\qquad \tau^{2^{n+2}l}u^{2^{n+1}k}w_n\cdot x^{2^{n+1}-1}
\end{gather*}
for $n\geq 0$ and $k,l\in\bbZ$. Then $Z[x]/B$ is the $x$-adic associated graded of $\pi_\star b(ER)$.
\tqed
\end{theorem}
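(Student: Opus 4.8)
The plan is to run the $x$-Bockstein spectral sequence $\pi_\star b(E)[x]\Rightarrow\pi_\star b(ER)$ obtained by applying $b(\bs)=F(EC_{2+},i_\ast\bs)$ to the cofiber sequence $\Sigma ER\xrightarrow{x}ER\rightarrow E$; its $E_1$-page is the known ring $\pi_\star b(E)=E_0[\rho,\tau^{\pm2},u^{\pm1}]_\rho^\wedge/(\rho h)$ with a polynomial class $x$ adjoined, sitting in the evident filtration degree. Because the assertion only concerns the $x$-adic associated graded, it suffices to pin down every differential and to verify convergence: hidden $x$-extensions are irrelevant here (they are what the cofiber sequences of \cref{thm:xicofib} are used for elsewhere). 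The essential input from \cref{thm:xicofib} is that $\xi$ is a permanent cycle surviving to a lift of $x$; as $\xi$ lies in filtration $0$ while $x$ is the filtration generator, the Leibniz rule will then propagate the differentials from a single source.

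First I would establish the differentials, which I expect to form a height-indexed family $d_{2^{n+1}-1}$ for $n\geq0$. The case $n=0$ is the integral-Bockstein step ($d_1$) that passes from $\pi_\star b(E)[x]$ to the homotopy fixed point $E_2$-page $H^\ast(C_2;\pi_\star b(E))$ of \cite{balderrama2021borel}; it is read off from the relation $\rho h_0=0$ in $\pi_\star b(E)$ together with the survival of $\rho$ and $\xi$ (hence of $z=\rho\xi$): lifting the relation to $\pi_\star b(ER)$, where $b(E)=b(ER)/x$, it becomes $\rho h_0\in x\cdot\pi_\star b(ER)$, and identifying the $x$-leading term gives $d_1$ with leading term a unit times $u_0\,x=2x$ times $\tau$- and $u$-monomials. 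For the inductive step, on the $E_{2^{n+1}-1}$-page the earlier differentials have made $u_0,\dots,u_{n-1}$ act as zero, so the formal-group recursion $h_n\equiv u_n+z^{2^n}h_{n+1}\pmod{u_0,\dots,u_{n-1}}$ feeds the height-$(n-1)$ picture into a differential
\[
d_{2^{n+1}-1}\!\left(\tau^{2^{n+2}l}u^{2^{n+1}k}w_n\right)\ \doteq\ \tau^{2^{n+2}l}u^{2^{n+1}k}u_n\cdot x^{2^{n+1}-1},
\]
together with its $h_n$-flavoured companion governing $\tau$-exponents $\equiv2^{n+1}\pmod{2^{n+2}}$, where $w_n=\tau^{2^{n+1}}u^{-2^n}(h_n-u_n)$. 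I would justify that this differential occurs, and that nothing shorter preempts it and nothing longer extends it, by reducing modulo $(u_0,\dots,u_{n-1})$ and comparing by naturality to the universal height-$n$ situation, with the complementary cofiber sequence $\Sigma^\sigma b(ER)\xrightarrow{\xi}b(ER)\rightarrow E\bbR$ available to bound $\pi_\star b(ER)$ from above against the known $\pi_\star E\bbR$ and to confirm there are no further differentials.

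It then remains to identify $\bigcap_r\ker d_r\,\big/\,\sum_r\im d_r$ with $Z[x]/B$. This is a bookkeeping computation in $\pi_\star b(E)[x]$: the classes surviving all the $d_{2^{n+1}-1}$ are exactly the generators of $Z$ in the statement --- the $\tau$- and $u$-exponent congruences recording, for each surviving class, which of its $\tau$- and $u$-multiples still survive on each page --- and the boundaries are exactly the three families generating $B$. Finally one checks that the $x$-Bockstein spectral sequence converges, which is where the $\rho$-completion of $\pi_\star b(E)$ and the $(\rho,\xi)$-completion in the definition of $Z$ enter, so that its $E_\infty$-page is by construction the $x$-adic associated graded of $\pi_\star b(ER)$, equal to $Z[x]/B$.

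The main obstacle is the second step: establishing the exact differential pattern. Running the height induction requires enough control over each intermediate page to see that the expected $d_{2^{n+1}-1}$ genuinely occurs and is neither preempted nor truncated --- precisely where \cref{thm:xicofib} and the comparison with $\pi_\star E\bbR$ must do real work --- and one must extract the multiplicative leading terms carefully, in particular the interaction of the Thom class $\tau$ with the homotopy fixed point differentials, so that the congruences $2^{n+2}l$ versus $2^{n+1}(2l+1)$ come out exactly as in the definitions of $Z$ and $B$.
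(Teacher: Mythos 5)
Your overall strategy --- run the $x$-Bockstein spectral sequence, use $\xi$ as a permanent cycle, proceed by induction on height, identify cycles and boundaries, then handle convergence via $\rho$-completeness --- is the same as the paper's. But the middle of your argument contains a concrete error, and the hard step is left unexecuted in a way that the vague appeals you make would not fill.

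The differential you display is degree-inconsistent. You propose
\[
d_{2^{n+1}-1}\!\left(\tau^{2^{n+2}l}u^{2^{n+1}k}w_n\right)\ \doteq\ \tau^{2^{n+2}l}u^{2^{n+1}k}u_n\cdot x^{2^{n+1}-1},
\]
but the common prefactor aside, $|w_n| = |\tau^{2^{n+1}}u^{-2^n}| = -2^{n+1}\sigma$ while $|u_n x^{2^{n+1}-1}| = 2^{n+1}-1$, and these differ by $2^{n+1}(1+\sigma)$ rather than by the required $1$. Moreover $w_n = \rho^{2^{n+1}}h_{n+1}$ on the page in question, and $h_{n+1}$ survives to define the next page; so $w_n$ is a \emph{target}, not a source. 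The correct pair of generating differentials is
\[
d_{2^{n+1}-1}(u^{2^n}) = u_n x^{2^{n+1}-1},\qquad d_{2^{n+1}-1}(\tau^{2^{n+1}}) = -w_n x^{2^{n+1}-1},
\]
both flowing out of the periodicity classes. The first is forced by naturality along $\res\colon b(ER)\to i_\ast ER$ together with the known $x$-Bockstein differential for $\pi_\ast ER$ (\cref{lem:cofib}). The second --- the genuinely new content --- does not follow from ``naturality to the universal height-$n$ situation'' or from a comparison bound against $\pi_\star E\bbR$: nothing in that direction pins down the \emph{source} $\tau^{2^{n+1}}$ or the \emph{target} coefficient. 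What the paper does instead is exactly the Leibniz computation you gesture at but do not carry out: since $\xi$ is a permanent cycle, so is $\xi^{2^n} = \rho^{2^n}\tau^{-2^{n+1}}u^{2^n}$, and expanding
\[
0 = d_{2^{n+1}-1}(\xi^{2^n}) = \rho^{2^n}\bigl(\tau^{-2^{n+2}}a\,u^{2^n} + \tau^{-2^{n+1}}u_n\bigr)x^{2^{n+1}-1}
\]
on the $E_{2^n}$-page (where $a = d_{2^{n+1}-1}(\tau^{2^{n+1}})/x^{2^{n+1}-1}$) determines $a$ to be $\tau^{2^{n+1}}u^{-2^n}(h_n-u_n) = w_n$, using the structure of $E_{2^n} = Z_{2^n-1}[x]/B_{2^n-1}$ from the inductive hypothesis. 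The cycles-and-boundaries bookkeeping and the $n=0$ base case are then as you describe, and the $(\rho,\xi)$-completion handles convergence.

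In short: the framework is right and matches the paper, but the stated inductive differential is both degree-wrong and has the arrows backwards, and the replacement for the Leibniz-rule derivation (naturality, universal comparison, bounding against $\pi_\star E\bbR$) would not actually produce the differential. The $\xi$-cofiber sequence of \cref{thm:xicofib} plays its role here only through $\xi$ being a permanent cycle; it is not used to compare against $\pi_\star E\bbR$ within the Bockstein computation, but rather later, in the hidden-extension analysis.
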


\begin{rmk}
In integer degrees, $\pi_\ast b(ER)$ is very simply described:
\[
\pi_\ast b(ER) \cong ER_\ast[[z]]/([2](z)),
\]
see \cref{cor:integer}. This does not require the full computation of $\pi_\star b(ER)$, and follows as soon as one knows that $\xi$ is a permanent cycle. In particular, $\pi_0 b(ER)\cong E^0 BC_2$. To get a feeling for $\pi_\star b(ER)$ outside integer degrees, the reader may wish to peruse the tables in \cref{rmk:table}, which list $\pi_0 b(ER)$-module generators for the groups $\pi_{c+w\sigma} b(ER)$ in a range.
\tqed
\end{rmk}

\begin{rmk}
Implicit in \cref{thm:ber} is the fact that $\tau^{2^{n+2}l}u^{2^{n+1}k}w_n \in Z$ for $n\geq 0$ and $k,l\in\bbZ$. In particular, 
\begin{align*}
\tau^{2^{n+2}(2l+1)}u^{2^{n+2}k}w_n &= \rho^{2^{n+1}}\cdot \tau^{2^{n+2}(2l+1)}u^{2^{n+2}k}h_{n+1}, \\
\tau^{2^{n+3}l}u^{2^{n+1}(2k+1)} w_n &= \xi^{2^{n+1}}\cdot \tau^{2^{n+2}(2l+1)}u^{2^{n+2}k}h_{n+1}, \\
\tau^{2^{n+3}l}u^{2^{n+2}k}w_n &\equiv \rho^{2^{n+1}}\cdot \tau^{2^{n+3}l}u^{2^{n+2}k}u_{n+1} + \xi^{2^{n+1}}\cdot \tau^{2^{n+3}l}u^{2^{n+2}k}w_{n+1},\\
\tau^{2^{n+2}(2l-1)}u^{2^{n+1}(2k+1)}w_n &\equiv \xi^{2^{n+1}}\cdot \tau^{2^{n+3}l}u^{2^{n+2}k}u_{n+1} + \rho^{2^{n+1}}\cdot\tau^{2^{n+3}(l-1)}u^{2^{n+2}(k+1)}w_{n+1},
\end{align*}
where the last two formulas hold mod $u_0,\ldots,u_n$.
\tqed
\end{rmk}

The ring $Z[x]/B$ is the $E_\infty$-page of the $x$-Bockstein spectral sequence for $\pi_\star b(ER)$, obtained after running differentials which are generated by
\[
d_{2^{n+1}-1}(u^{2^n}) = u_n x^{2^{n+1}-1},\qquad d_{2^{n+1}-1}(\tau^{2^{n+1}}) = -w_n x^{2^{n+1}-1}.
\]
The differentials on $u^{2^n}$ appear in the $x$-Bockstein spectral sequence for $\pi_\ast ER$, and are consequences of Hu and Kriz's computation of $\pi_\star M\bbR$ \cite{hukriz2001real}, as we review in \cref{sec:even}. The differentials on $\tau^{2^{n+1}}$ are the core of our computation. These differentials turn out to be forced by the permanent cycle $\xi = \rho\tau^{-2}u$, by a Leibniz rule argument based on $d_{2^{n+1}-1}(\xi^{2^n}) = 0$. This argument would not be possible if one tried to compute each $ER^\ast P_j$ individually, and illustrates the strength of using the $C_2$-spectrum $b(ER)$ as a tool for packaging information about the cohomology of all stunted projective spectra into one object.

One might also try to understand $\pi_\star b(ER)$ through the $\rho$-Bockstein or the $\xi$-Bockstein spectral sequences. Using the cofiber sequences \cref{eq:standard} and \cref{eq:wood}, these are of signature
\[
\pi_\star(C_{2+} \otimes i_\ast ER)[\rho]\Rightarrow \pi_\star b(ER),\qquad \pi_\star E\bbR[\xi]\Rightarrow \pi_\star b(ER).
\]
Here, $\pi_\star (C_{2+}\otimes i_\ast ER)\cong \pi_\ast ER[u_\sigma^{\pm 1}]$ with $|u_\sigma| = 1-\sigma$, and in degrees $\ast+w\sigma$ the $\rho$-Bockstein spectral sequence is exactly the Atiyah--Hirzebruch spectral sequence for $ER^\ast P_w$ based on the standard cell structure of $P_w$. By construction, the differentials in these spectral sequences are controlled by the boundary maps
\[
\tr(u_\sigma^{-1}\cdot\bs)\colon \pi_{\star+1-\sigma}(C_{2+}\otimes i_\ast ER)\rightarrow \pi_{\star} b(ER),\qquad \partial\colon \pi_{\star+1+\sigma} E\bbR\rightarrow \pi_\star b(ER)
\]
for the cofiber sequences \cref{eq:standard} and \cref{eq:wood}. This first boundary map is exactly the transfer for the $C_2$-spectrum $b(ER)$. Although we do not know whether it is is feasible to compute either the $\rho$-Bockstein or $\xi$-Bockstein spectral sequence directly, we can use our computation of $\pi_\star b(ER)$ to deduce the following.

Write $\ol{u} \in \pi_{1+\sigma}E\bbR$ for the invertible element guaranteed by the $MP\bbR$-orientation of $E\bbR$.

\begin{theorem}[\cref{sec:tr}]\label{thm:trpartial}
The above transfer and boundary maps satisfy
\begin{align*}
\tr(u_\sigma^{-1}\cdot u_\sigma^{2^n(2k+1)}) &= \rho^{2^n-1}\tau^{2^{n+1}k}h_n x^{2^n-1}+O(x^{2^n}),\\
\partial(\ol{u}^{2^n(2k+1)}) &= \xi^{2^n-1}\tau^{-2^{n+1}k}u^{2^{n+1}k}h_nx^{2^n-1}+O(x^{2^n})
\end{align*}
for $n\geq 0$ and $k\in\bbZ$.
\tqed
\end{theorem}

\pagebreak[1]

The error terms here are necessary as the classes $\tau^{2^{n+1}k}h_n$ and $\tau^{-2^{n+1}k}u^{2^{n+1}k}h_n$ have only been defined mod $x$. It is amusing to observe that \cref{thm:trpartial} produces elements of arbitrarily high $x$-adic filtration in the $C_2$-equivariant Hurewicz image of $b(MPR)$; as far as we know, such families have not yet been constructed in the nonequivariant Hurewicz image of $MPR$.

\cref{thm:ber} does not quite describe the ring $\pi_\star b(ER)$, but only its $x$-adic associated graded $Z[x]/B$. The latter is a good approximation to the former, particularly when compared to the $\rho$-adic and $\xi$-adic associated graded rings, where the classes $\rho$ and $\xi$ appear as simple $2$-torsion classes. Still, taking the $x$-adic associated graded does kill some information, and it seems to be a subtle problem to completely reconstruct the ring $\pi_\star b(ER)$. Although we shall not completely resolve this, we do discuss where to find hidden $\rho$ and $\xi$-extensions. The importance of $\rho$-extensions is clear: as
\[
\pi_{c+w\sigma}(b(ER)/(\rho^{m})) = ER^{-c} (P^{w-1}_{w-m}),
\]
one must understand the action of $\rho$ if one wishes to extract information about the $ER$-cohomology of finite projective spaces. The importance of $\xi$-extensions is clear from the perspective of $C_2$-equivariant homotopy theory: just as important classes in the Hurewicz image of $ER$ are supported on $x$, important classes in the $C_2$-equivariant Hurewicz image of $b(ER)$ are supported on $\xi$, such as the equivariant Hopf fibrations $\eta_{C_2}$, $\nu_{C_2}$, and $\sigma_{C_2}$ detected in $\pi_\star b(ER)$ by $h_1\xi$, $h_2\xi^2 x$, and $h_3\xi^4 x^3$ respectively, and so the action of $\xi$ gives information about the behavior of these elements. The cofiber sequences \cref{eq:wood} and \cref{eq:standard} give information about $\rho$ and $\xi$-extensions, leading to the following.

\pagebreak[1]

\begin{samepage}
\begin{theorem}[\cref{sec:ext}]
There are extensions
\begin{align*}
\rho\cdot \tau^{2(2^{n+1}k-r)}u^{2^{n+1}(2l+1)}h &= \left(\tau^{2^{n+2}k}u^{2^{n+2}l}h_{n+1} \xi^{2r-1}+O(\rho)\right)x^{2^{n+2}-2r}+O(x^{2^{n+2}-2r+1})\\
\xi\cdot\tau^{2(2^{n+1}k+r)}u^{2(2^n(2l+1)-r)}h &= \left(\tau^{2^{n+2}k}u^{2^{n+2}l}h_{n+1} \rho^{2r-1}+O(\xi)\right)x^{2^{n+2}-2r}+O(x^{2^{n+2}-2r+1})
\end{align*}
for $n\geq 0$, $k,l\in\bbZ$, and $1\leq r \leq 2^{n+1}-1$.
\tqed
\end{theorem}
\end{samepage}

As with \cref{thm:ber}, implicit in this theorem is the fact that the terms on the left and right do in fact live in $\pi_\star b(ER)$, for example $\tau^4 h = 2\tau^4 + \rho\xi\cdot\tau^4h_1$. The error terms are present to remind the reader that these are extensions and not products: to resolve them would require describing how to lift classes from $Z$ to $\pi_\star b(ER)$, and we shall not pursue this. In particular, if $k$ is even then the $h_{n+1}$ terms on the right may be replaced with $u_{n+1}$ without affecting the theorem statement.

This concludes our description of $\pi_\star b(ER)$. Although $\pi_\star b(ER)$ is complicated, it is not impossible to work with. We illustrate this in \cref{sec:root} by computing some $MPR$-based Mahowald invariants. Li--Shi--Wang--Xu \cite{lishiwangxu2019hurewicz} have shown that Real bordism detects the Hopf elements, Kervaire classes, and $\kappabar$ family. These are the elements in $\pi_\ast S$ detected in the classical Adams spectral sequence by the $\Sq^0$-families generated by $h_0$, $h_0^2$, and $g_1$. We compute the iterated $MPR$-based Mahowald invariants of $2$, $4$, and $\kappabar$, showing that they line up with these $\Sq^0$-families exactly.

\section{Even-periodic Real Landweber exact spectra}\label{sec:even}

We begin by recording some properties of $E\bbR$ and $ER$. The material of this section is essentially a translation to the even-periodic setting of familiar facts about the Real Johnson--Wilson theories. We would like to avoid confusion between elements of $\pi_\star b(ER)$ and $\pi_\star E\bbR$, so in this section we write $a_\sigma$ instead of $\rho$ for the Euler class of the sign representation, and use the symbol $u_\sigma$ for what would previously have been written $\tau$. In particular, these symbols have degrees
\[
|a_\sigma| = -\sigma,\qquad |u_\sigma| = 1-\sigma.
\]

Before considering $E\bbR$, we consider the $C_2$-HFPSS in general. There is a cofiber sequence
\begin{equation}\label{eq:rhocof}
\begin{tikzcd}
S^{-\sigma}\ar[r,"a_\sigma"]&S^0\ar[r]&C_{2+}
\end{tikzcd}.\end{equation}
Let $X$ be a $C_2$-spectrum. Then we may identify
\[
\pi_\star(C_{2+}\otimes X)\cong \pi_\ast^e X[u_\sigma^{\pm 1}],
\]
and so the $a_\sigma$-Bockstein spectral sequence for $X$ is of signature
\[
E_1 = \pi_\ast^e X[u_\sigma^{\pm 1},a_\sigma]\Rightarrow \pi_\star X,
\]
where $\pi_\ast^e X$ are the homotopy groups of the underlying spectrum of $X$. This spectral sequence converges conditionally to the homotopy groups of the $a_\sigma$-completion of $X$, which may be identified as its Borel completion $F(EC_{2+},X)$. Moreover we have the following fact, see for instance \cite[Lemma 4.8]{hillmeier2017c2}.

\begin{lemma}\label{lem:bockhfpss}
For any $C_2$-spectrum $X$, the $a_\sigma$-Bockstein spectral sequence for $X$ agrees with the HFPSS for $X$ from the $E_2$-page on.
\qed
\end{lemma}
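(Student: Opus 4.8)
The plan is to compare the two spectral sequences by identifying both with filtration spectral sequences arising from a common filtered object, namely the skeletal/Postnikov-type filtration of $EC_{2+}$. Recall that $EC_2 \simeq S(\infty\sigma)$ may be built as the colimit of the unit spheres $S(n\sigma)$, and the cofiber sequences $S(n\sigma)_+ \to S((n+1)\sigma)_+ \to S^{n\sigma}$ (coming from attaching cells) exhibit $EC_{2+}$ as a filtered $C_2$-spectrum whose associated graded is $\bigoplus_n S^{n\sigma}$. Smashing with $X$ and applying $F(-,X)$ appropriately, or rather working with $F(EC_{2+},X)$ directly, one sees that the tower $\{F(S(n\sigma)_+, X)\}_n$ has the $a_\sigma$-adic tower $\{\Sigma^{-n\sigma} \mathrm{cofib}(a_\sigma^n)\}$ interleaved with it. The key point is that the cofiber sequence $S^{-\sigma}\xrightarrow{a_\sigma} S^0 \to C_{2+}$ is precisely the $n=0$ stage of the cellular filtration of $S(\infty\sigma)_+ = EC_{2+}$, so iterating $a_\sigma$ literally recovers the skeletal filtration of $EC_{2+}$ up to a shift by $-n\sigma$ at the $n$-th stage.

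The steps I would carry out: first, set up the $a_\sigma$-Bockstein spectral sequence as the homotopy spectral sequence of the tower obtained by mapping out of the cellular filtration of $EC_{2+}$, being careful about the $\Sigma^{-n\sigma}$ twists — concretely, the $n$-th filtration quotient of this tower is $F(S^{n\sigma}, X) \simeq \Sigma^{-n\sigma}X$, and on homotopy this contributes $\pi_\ast^e X \cdot u_\sigma^{-n} a_\sigma^n$ to the $E_1$-page, matching the stated signature $\pi_\ast^e X[u_\sigma^{\pm1}, a_\sigma]$ once one inverts $u_\sigma$ (equivalently, re-indexes using that the same tower computes $F(EC_{2+}, X)$ regardless of which $\Sigma^{j\sigma}X$ we start mapping from). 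Second, recall the standard construction of the HFPSS as the homotopy spectral sequence of the tower $\{F(EC_2^{(n)}{}_+, X)\}$ where $EC_2^{(n)}$ is the $n$-skeleton of a CW model of $EC_2$ with free $C_2$-action; its $E_2$-page is $H^\ast(C_2; \pi_\ast^e X)$. Third — the crux — identify the two towers. The model $S(\infty\sigma) = \colim S(n\sigma)$ is a free contractible $C_2$-CW complex, so it is a legitimate model for $EC_2$, and $S(n\sigma)$ has cells in dimensions $0,1,\dots,n-1$; thus $\{S(n\sigma)_+\}$ is (up to reindexing and the $\sigma$-twists, which wash out after $u_\sigma$-inversion/reindexing) the cellular filtration underlying the HFPSS. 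Hence the $E_1$-pages agree after the evident identification, and one checks the $d_1$ differentials agree — they are both the cellular differential computing group cohomology — which forces agreement from $E_2$ onward.

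The main obstacle I expect is bookkeeping the representation-sphere twists $\Sigma^{-n\sigma}$ correctly: the $a_\sigma$-Bockstein tower is not literally the skeletal tower of $EC_{2+}$ but the skeletal tower with the $n$-th stage twisted by $-n\sigma$, so one must argue that this twist does not affect the spectral sequence past $E_1$ — essentially because multiplication by $u_\sigma$ is invertible on the associated graded once we are mapping into (rather than smashing with) $X$, which lets us "untwist" and recover the usual cellular filtration. The cleanest way to handle this is probably to cite the identification of the $a_\sigma$-completion with the Borel completion (as the excerpt already does) and then invoke the standard fact that any conditionally convergent tower whose $E_1$-term and $d_1$ agree with those of the HFPSS must agree with it thereafter; this is exactly the content of \cite[Lemma 4.8]{hillmeier2017c2}, so in practice the proof is a one-line citation, and the paragraphs above are the justification for why that citation is the right one. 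I would simply write: this is \cite[Lemma 4.8]{hillmeier2017c2}, sketching the tower comparison above for the reader's convenience.
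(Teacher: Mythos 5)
Your approach matches the paper's: the lemma is a citation to \cite[Lemma 4.8]{hillmeier2017c2}, and the paper's one-line gloss on the proof is exactly the tower comparison you describe, identifying the $a_\sigma$-Bockstein tower with the Borel cohomology filtration arising from the standard cellular filtration of $EC_{2+}$. One small caution: the ``standard fact'' you invoke at the end --- that agreement of $E_1$-terms and $d_1$-differentials forces agreement from $E_2$ on --- is false in the abstract (two spectral sequences can share an $E_2$-page and diverge at $d_2$); what actually does the work is the map of filtered objects/towers you set up in the earlier steps, and that tower identification, not a general comparison principle, is the content of \cite[Lemma 4.8]{hillmeier2017c2}.
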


The proof amounts to identifying the $a_\sigma$-Bockstein spectral sequence with the Borel cohomology spectral sequence induced by the standard cellular filtration of $EC_{2+}$. This identification leads to the following.

\begin{lemma}\label{lem:hfpssd1}
Let $X$ be a $C_2$-spectrum, and write $\psi^{-1}$ for the involution on $\pi_\ast^e X$. Then the $d_1$-differential in the $a_\sigma$-Bockstein spectral sequence for $X$ is given by
\[
d_1(\alpha u_\sigma^n a_\sigma^m) = (\alpha - (-1)^n \psi^{-1}(\alpha))u_\sigma^{n-1}a_\sigma^{m+1}
\]
for $\alpha\in \pi_\ast^e X$. In particular, if $X$ carries a product, then the differentials satisfy the Leibniz rule
\[
d_r(\alpha\beta) = d_r(\alpha)\beta+\psi^{-1}(\alpha)d_r(\beta)
\]
for $r\geq 1$, where the $\psi^{-1}$ may be omitted for $r\geq 2$.
\qed
\end{lemma}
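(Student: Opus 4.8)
The plan is to use the identification, recalled in the discussion preceding \cref{lem:bockhfpss}, of the $a_\sigma$-Bockstein spectral sequence for $X$ with the spectral sequence of the skeletal filtration of $F(EC_{2+},X)$ for the standard $C_2$-CW structure on $EC_2$. Recall that $EC_2 = S(\infty\sigma)$ may be built with a single free cell $C_2\times D^n$ in each dimension $n\geq 0$, so that $(EC_2)^{(n)}/(EC_2)^{(n-1)}\simeq\Sigma^n C_{2+}$ and the cellular chain complex is the standard minimal free resolution
\[
\cdots\xrightarrow{1+\gamma}\bbZ[C_2]\xrightarrow{1-\gamma}\bbZ[C_2]\xrightarrow{1+\gamma}\bbZ[C_2]\xrightarrow{1-\gamma}\bbZ[C_2]
\]
of $\bbZ$ over $\bbZ[C_2]$, where $\gamma$ generates $C_2$ and $\partial_n\colon C_n\to C_{n-1}$ is $1+(-1)^n\gamma$. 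After the identifications $F(\Sigma^n C_{2+},X)\simeq\Sigma^{-n}(C_{2+}\otimes X)$ and $\pi_\star(C_{2+}\otimes X)\cong\pi_\ast^e X[u_\sigma^{\pm 1}]$, the associated graded of this filtration produces the $E_1$-page $\pi_\ast^e X[u_\sigma^{\pm 1},a_\sigma]$, with the cohomological (cellular) degree recorded — up to the fixed shift determined by the total degree — by the exponent of $u_\sigma$, while $a_\sigma$ records $a_\sigma$-adic filtration.

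For the $d_1$-differential: it is the map induced on associated graded by the cellular coboundary, i.e.\ by the $\bbZ[C_2]$-linear dual of $\partial_{n+1}=1+(-1)^{n+1}\gamma$. Evaluating on $a\in\pi_\ast^e X$ sitting in cohomological degree $n$, and using that $\gamma$ acts on $\pi_\ast^e X$ as $\psi^{-1}$, this dual sends $a\mapsto a-(-1)^n\psi^{-1}(a)$. Translating through the bookkeeping — the coboundary raises cohomological degree by one, so $u_\sigma^n\mapsto u_\sigma^{n-1}$, and raises $a_\sigma$-filtration by one, so $a_\sigma^m\mapsto a_\sigma^{m+1}$ — gives the stated formula. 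As a sanity check, taking $X=S^0$ and $a=1$ yields $d_1(u_\sigma^n)=(1-(-1)^n)u_\sigma^{n-1}a_\sigma$, consistent with the alternating $0,2,0,2,\dots$ coboundaries of the cochain complex computing $H^\ast(C_2;\bbZ)$.

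For the Leibniz rule, use that the spectral sequence is multiplicative when $X$ is a homotopy ring: the diagonal $EC_2\to EC_2\times EC_2$ is cellular and refines to a diagonal approximation $W_\ast\to W_\ast\otimes W_\ast$ on the minimal resolution, making $F(EC_{2+},X)$ a filtered ring and inducing a product on the spectral sequence. Any such diagonal approximation on the minimal resolution of $\bbZ$ over $\bbZ[C_2]$ must involve the group element $\gamma$; tracking this through the identification of $E_1$ shows that the product on $\pi_\ast^e X[u_\sigma^{\pm 1},a_\sigma]$ is the $\psi^{-1}$-twisted one in which crossing a cell conjugates, and with respect to this twisted product the Leibniz rule for $d_1$ takes the asserted form $d_1(ab)=d_1(a)b+\psi^{-1}(a)d_1(b)$. (Alternatively, once the twisted product is pinned down, this identity may simply be checked directly against the $d_1$-formula above.) Finally, for $r\geq 2$ the page $E_r$ is a subquotient of $E_2=H^\ast(C_2;\pi_\ast^e X)$ by \cref{lem:bockhfpss}, and the conjugation action $\psi^{-1}=\gamma_\ast$ is trivial there since inner automorphisms act trivially on group cohomology; hence the twist may be dropped from the Leibniz rule for $r\geq 2$.

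The main obstacle I expect is purely bookkeeping: confirming that the alternating sign $(-1)^n$ of the cellular coboundary attaches to the exponent of $u_\sigma$ rather than to the $a_\sigma$-filtration, and correctly locating the conjugation twist (whether $\psi^{-1}$ sits on the left or right factor, together with the accompanying Koszul signs). This is routine but delicate; the safest route is to fix all signs and the precise form of the twisted product in the universal case $X=S^0$, where the answer is forced by the known structure of $H^\ast(C_2;\bbZ)$, and then appeal to naturality of the spectral sequence in $X$.
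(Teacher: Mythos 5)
Your overall strategy — identify the $a_\sigma$-Bockstein spectral sequence with the Borel cohomology spectral sequence coming from the skeletal filtration of $F(EC_{2+},X)$, and then read off $d_1$ from the cellular coboundary of the minimal free resolution — is exactly the route the paper intends (it gives no written argument, only the pointer in the paragraph preceding \cref{lem:bockhfpss}). The Leibniz rule for $r\geq 2$ via triviality of inner automorphisms on $H^\ast(C_2;-)$ is also a clean way to dispatch that clause. However, there is a genuine bookkeeping error in your sign derivation, and it is precisely at the spot you flag as ``the main obstacle.''

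The cellular degree of $EC_{2+}$ \emph{is} the $a_\sigma$-adic filtration degree $m$, not the exponent of $u_\sigma$: each skeleton quotient $EC_2^{(m)}/EC_2^{(m-1)}\simeq\Sigma^m C_{2+}$ contributes the $m$th filtration layer, and the coboundary $\delta^m = 1+(-1)^{m+1}\gamma$ therefore alternates with $m$. Your identification ``cohomological (cellular) degree recorded $\ldots$ by the exponent of $u_\sigma$'' conflates these, and this misattribution is what makes the $(-1)^n$ appear to fall out directly. What you are missing is the Weyl-group sign on the Thom class: right multiplication by $\gamma$ on $C_{2+}\otimes X$ sends $u_\sigma\mapsto -u_\sigma$ in $\pi_\star(C_{2+}\otimes X)$, so that $\gamma(a u_\sigma^{n'}) = (-1)^{n'}\psi^{-1}(a)u_\sigma^{n'}$. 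The translation between the cellular associated graded $\Sigma^{-m}(C_{2+}\otimes X)$ and the Bockstein associated graded $\Sigma^{m\sigma}(C_{2+}\otimes X)$ is via multiplication by $u_\sigma^m$, so the cellular $u_\sigma$-exponent is $n' = n-m$; only after combining the cellular alternation $(-1)^m$ with the Weyl sign $(-1)^{n'} = (-1)^{n-m}$ does one recover $(-1)^{m+n'}=(-1)^n$. Your ``two errors'' cancel, so the final formula is correct, but the derivation as written is not. Relatedly, your proposed sanity check against $H^\ast(C_2;\bbZ)$ (i.e.\ $X=S^0$ in integer degrees) cannot detect this: there $n=-m$, so $(-1)^n$ and $(-1)^m$ are indistinguishable, and $\psi^{-1}$ is trivial anyway. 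To pin the sign down you would need a non-integer test degree with a nontrivial involution, or else an honest computation of the Weyl action.

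A smaller point on the Leibniz rule: the product on $E_1=\pi_\ast^e X[u_\sigma^{\pm1},a_\sigma]$ is the ordinary (untwisted) one; the twist lives in the differential, not the ring structure. Writing $d_1$ schematically as $1-\gamma$ with $\gamma$ the full Weyl action, the identity $d_1(ab)=d_1(a)b+\gamma(a)d_1(b)$ is a one-line algebraic check and requires no twisted multiplication. Your parenthetical alternative (``check directly against the $d_1$-formula above'') is the correct move; the claim that the product is $\psi^{-1}$-twisted is a detour that you should drop.
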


Now let $E\bbR$ be as in the introduction: a strongly even and even-periodic and Real Landweber exact $C_2$-spectrum in the sense of \cite[Section 3.2]{hillmeier2017c2}, with underlying spectrum $E$. This set of assumptions means three things. First, $E\bbR$ is a homotopy commutative $C_2$-ring spectrum equipped with a multiplicative orientation $MP\bbR\rightarrow E\bbR$. In particular, there is an invertible element $\ol{u} \in \pi_{1+\sigma}E\bbR$ coming from the generator of the $n=1$ summand of $MP\bbR\simeq\bigoplus_{n\in\bbZ}\Sigma^{n(1+\sigma)}M\bbR$. Second, $\pi_0 E\bbR \cong \pi_0 E$, and in general
\[
E\bbR_\star X \cong E_0\otimes_{MP_0}MP\bbR_\star X
\]
for any $C_2$-spectrum $X$. Third, $\pi_{-1}E\bbR = 0$. Implicit in these is the fact that $MP\bbR$ itself satisfies these conditions. This is nontrivial, and follows from work of Hu and Kriz on Real cobordism \cite{hukriz2001real}. We recall the key calculation.

\begin{lemma}\label{lem:erbock}
The $C_2$-spectrum $E\bbR$ is Borel complete, with $a_\sigma$-Bockstein spectral sequence
\[
E_1 = E_0[\ol{u}^{\pm 1},u_\sigma^{\pm 1},a_\sigma]\Rightarrow \pi_\star E\bbR,
\]
where
\[
|\ol{u}| = 1+\sigma,\qquad |u_\sigma| = 1-\sigma,\qquad |a_\sigma| = -\sigma.
\]
The differentials are $E_0[\ol{u}^{\pm 1},a_\sigma]$-linear, and are generated by
\[
d_{2^{n+1}-1}(u_\sigma^{2^n}) = u_n \ol{u}^{2^n-1}a_\sigma^{2^{n+1}-1},
\]
where $u_n = u^{-(2^n-1)} v_n \in E_0$. In particular, $\pi_0 E\bbR = \pi_0 E$.
\end{lemma}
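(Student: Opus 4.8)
The plan is to identify the $E_1$-page and its differentials by transporting everything through Real Landweber exactness from the known $a_\sigma$-Bockstein spectral sequence of $M\bbR$ (equivalently, the HFPSS of $MU$), or rather its even-periodic form $MP\bbR$. First I would establish the signature of the $E_1$-page. By the general discussion preceding \cref{lem:bockhfpss}, the $a_\sigma$-Bockstein spectral sequence for any $C_2$-spectrum $X$ has $E_1 = \pi_\ast^e X[u_\sigma^{\pm 1},a_\sigma]$. For $X = E\bbR$ the underlying spectrum is $E$, which is even-periodic with $\pi_\ast E = E_0[\bar u^{\pm 1}]$ where $\bar u = u$ is the chosen degree-$2$ unit; the even-periodicity extends the $M\bbR$-orientation to an $MP\bbR$-orientation by hypothesis, so $\bar u$ lifts to the equivariant class of degree $1+\sigma$ that gives the Thom class for the complex sign representation. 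Hence $\pi_\ast^e E\bbR = E_0[\bar u^{\pm 1}]$ and the stated $E_1$-page follows, with the indicated degrees once one tracks that $u_\sigma$ has degree $1-\sigma$ and $a_\sigma$ has degree $-\sigma$ (the restriction sends $\bar u \mapsto u$, matching $|\bar u| = 1+\sigma$).

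Next I would pin down the differentials. By \cref{lem:hfpssd1} the spectral sequence satisfies a Leibniz rule (with a sign twist at $E_1$ by the conjugation action $\psi^{-1}$, which can be dropped from $E_2$ on), and the differentials are linear over the classes fixed by $\psi^{-1}$ and concentrated in degrees where the Bockstein acts trivially; in particular $a_\sigma$ is a permanent cycle and $\bar u$ is $\psi^{-1}$-fixed (as the restriction of an equivariant class), so the differentials are $E_0[\bar u^{\pm 1}, a_\sigma]$-linear and it remains to compute $d_r(u_\sigma^{2^n})$. For this I appeal to the corresponding computation for $M\bbR$: the HFPSS / $a_\sigma$-Bockstein spectral sequence for $M\bbR$ (or $MP\bbR$) has its first differential off $u_\sigma^{2^n}$ in filtration $2^{n+1}-1$, hitting the class detecting $v_n$, i.e. $d_{2^{n+1}-1}(u_\sigma^{2^n}) = v_n a_\sigma^{2^{n+1}-1}$ up to the unit bookkeeping — this is the standard "$v_n$ is detected in HFPSS filtration $2^{n+1}-1$" statement, equivalently the slice-differential input underlying \cite{hillhopkinsravenel2016nonexistence} and worked out for $ER(n)$ in the Kitchloo--Wilson program. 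Applying Real Landweber exactness $E\bbR_\star(-) \cong E\bbR_\star \otimes_{M\bbR_\star} M\bbR_\star(-)$ to the cellular filtration of $EC_{2+}$ identifies the $E\bbR$-based spectral sequence with $E_0[\bar u^{\pm 1}]\otimes_{MU_\ast}$ the $M\bbR$-one, whence $d_{2^{n+1}-1}(u_\sigma^{2^n}) = u_n \bar u^{2^n-1} a_\sigma^{2^{n+1}-1}$ after rewriting $v_n = u^{2^n-1} u_n$ to match degrees, and these generate all differentials by the Leibniz rule.

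Finally, $\pi_0 E\bbR = \pi_0 E$ follows by reading off the $E_\infty$-page in degree $0$: the only contributions to $\pi_0$ come from monomials $\bar u^j u_\sigma^k a_\sigma^0$ with $j + k = 0$ and $j - k + \cdots = 0$, i.e. from $j = k = 0$, giving $E_0$; one checks this class supports no differential and is not hit (there is nothing in the relevant adjacent degrees with $a_\sigma$-power zero), and that the convergence is not an issue since the spectral sequence converges conditionally to $\pi_\star$ of the $a_\sigma$-completion $F(EC_{2+}, E\bbR) = E\bbR$ (as $E\bbR$ is already Borel-complete, being Landweber exact over the Borel-complete $M\bbR$ — or one notes $\pi_0$ is unaffected by completion here). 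The main obstacle is the second step: cleanly citing or re-deriving the value $d_{2^{n+1}-1}(u_\sigma^{2^n}) = v_n a_\sigma^{2^{n+1}-1}$ in the $M\bbR$-HFPSS and checking that Real Landweber exactness is compatible with the Bockstein/cellular filtration in the strong sense needed; everything else is formal manipulation of the Leibniz rule and degree bookkeeping.
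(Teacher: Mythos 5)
Your proof follows the same route as the paper's: identify the $E_1$-page from the general formula $\pi_*^e X[u_\sigma^{\pm 1},a_\sigma]$, then transport the differentials from $MP\bbR$ along Real Landweber exactness, citing Hu--Kriz for the $M\bbR$ computation. The paper's own proof is exactly this, and in fact is shorter, since it simply defines $\ol{u} = u_\sigma^{-1}u$ as a reparametrization of the $E_1$-page and cites Hu--Kriz for the rest. Two small cavils on your writeup: early on you identify $\ol{u}$ with the underlying unit $u \in \pi_2 E$, but $\ol{u}$ should be the $E_1$-page class $u_\sigma^{-1}u$ of degree $1+\sigma$ (which restricts to $u$ but is not equal to it); and the final paragraph asserting $\pi_0 E\bbR = \pi_0 E$ only considers monomials with $a_\sigma$-power zero, whereas degree $0$ also contains classes $\ol{u}^i u_\sigma^{-i}a_\sigma^{2i}$ for $i\geq 1$ on the $E_1$-page, which must be seen to vanish on $E_\infty$ (they do, as part of the Hu--Kriz computation, but your stated degree count skips over them). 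Neither issue affects the structure of the argument.
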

\begin{proof}
We first verify the given description of the $a_\sigma$-Bockstein spectral sequence. The $E_1$-page of the $a_\sigma$-Bockstein spectral sequence is given by $E_\ast[u_\sigma^{\pm 1},a_\sigma]$. To put this in the desired form, we set $\ol{u} = u_\sigma^{-1} u$ with $u\in \pi_2 E$ the unit; when $E = MP$, this generates the $n=1$ summand of $MP\bbR\simeq\bigoplus_{n\in\bbZ}\Sigma^{n(1+\sigma)}M\bbR$. As $E\bbR$ is Landweber exact over $MP\bbR$, the $a_\sigma$-Bockstein spectral sequence for $E\bbR$ is tensored down from the $a_\sigma$-Bockstein spectral sequence for $MP\bbR$, and here the computation is known by work of Hu and Kriz \cite{hukriz2001real}.

The $C_2$-spectrum $M\bbR$ is shown to be Borel complete in \cite[Theorem 4.1]{hukriz2001real}, and Landweber exactness extends the proof to $E\bbR$. By the Tate fracture square, $E\bbR$ is Borel complete if and only if the map $\Phi^{C_2}E\bbR\rightarrow E\bbR^{t C_2}$ is an equivalence, where $\Phi^{C_2}$ denotes the functor of geometric fixed points. Landweber exactness implies
\[
\pi_\ast \Phi^{C_2}E\bbR \cong E_0\otimes_{MP_0}\pi_\ast \Phi^{C_2}MP\bbR \cong E_0/(u_0,u_1,\ldots)[x^{\pm 1}],
\]
where $x = a_\sigma \ol{u} \in \pi_1 E\bbR$, the last identification coming from the equivalence $\Phi^{C_2}M\bbR\simeq MO$. This is exactly what one obtains computing $\pi_\ast E\bbR^{t C_2}$ by the Tate spectral sequence, which may itself be obtained from the above description of the $a_\sigma$-Bockstein spectral sequence by inverting $a_\sigma$. Thus $E\bbR$ is Borel complete as claimed.
\end{proof}

We now pass to the nonequivariant spectrum $ER = E\bbR^{C_2} \simeq E^{\h C_2}$. Note that $\pi_\ast ER$ is the portion of $\pi_\star E\bbR$ located in integer degrees, and write $x = a_\sigma \ol{u} \in \pi_1 ER$. We then have the following analogue of \cite{kitchloowilson2007fibrations} and \cite[Theorem 4.2]{kitchloowilson2008second}.

\pagebreak[1]

\begin{prop}\label{lem:cofib}
There is a cofiber sequence
\begin{center}\begin{tikzcd}
\Sigma ER\ar[r,"x"]&ER\ar[r]&E
\end{tikzcd},\end{center}
and thus for any spectrum $X$ an $x$-Bockstein spectral sequence
\[
E_1 = (E^\ast X)[x]\Rightarrow ER^\ast X,
\]
and this agrees with the HFPSS
\[
E_2 = H^\ast(C_2;E^\ast X)\Rightarrow ER^\ast X
\]
from the $E_2$-page on. Write $\psi^{-1}$ for the involution on $E^\ast X$. Then the $d_1$-differential in the $x$-Bockstein spectral sequence is given by
\[
d_1(\alpha) = (\alpha - \psi^{-1}(\alpha))x
\]
for $\alpha\in E^\ast X$, and the differentials satisfy the Leibniz rule
\[
d_r(\alpha\beta) = d_r(\alpha)\beta+\psi^{-1}(\alpha)d_r(\beta)
\]
for $r\geq 1$, where the $\psi^{-1}$ may be omitted for $r\geq 2$. All $x$-Bockstein differentials are $E_0$-linear, and when $X = S^0$ they are generated by
\[
d_{2^{n+1}-1}(u^{2^n}) = u_n x^{2^{n+1}-1}.
\]
\end{prop}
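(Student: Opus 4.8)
The statement splits into several claims: (i) the cofiber sequence $\Sigma ER\xrightarrow{x}ER\to E$; (ii) the resulting $x$-Bockstein spectral sequence and its agreement with the HFPSS; (iii) the $d_1$ formula and Leibniz rule; (iv) $E_0$-linearity and the generating differentials $d_{2^{n+1}-1}(u^{2^n})=u_n x^{2^{n+1}-1}$ when $X=S^0$. I would obtain everything by taking fixed points of the corresponding equivariant statements from \cref{lem:erbock} and the general machinery in \cref{lem:bockhfpss,lem:hfpssd1}, pulled back along the relationship $ER = E\bbR^{C_2}$, $E$ the underlying spectrum.

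\emph{Step 1: the cofiber sequence.} Smash the cofiber sequence \eqref{eq:rhocof} with $E\bbR$ and take $C_2$-fixed points. Since $(C_{2+}\otimes E\bbR)^{C_2}\simeq E$ (the underlying spectrum, by the Wirthm\"uller/adjunction identification), and $(S^{-\sigma}\otimes E\bbR)^{C_2} = \Sigma^{-\sigma}E\bbR$ has $\pi_\ast = \pi_{\ast+\sigma}E\bbR$, I would identify the connecting map $S^{-\sigma}\xrightarrow{a_\sigma}S^0$ after fixed points with multiplication by $x = a_\sigma\ol u\in\pi_1 ER$ — here one uses even-periodicity, i.e.\ the unit $\ol u\in\pi_{1+\sigma}E\bbR$, to untwist $\Sigma^{-\sigma}E\bbR\simeq\Sigma^{-1}E\bbR$ equivariantly so that the fixed points of $a_\sigma$ become multiplication by $a_\sigma\ol u$ on $\pi_\ast ER$. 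This is essentially the argument of Kitchloo--Wilson cited in the statement; the main subtlety is checking the periodicity generator is compatible with the Real orientation, which is built into the hypothesis that $E\bbR$ is even-periodic Real Landweber exact.

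\emph{Step 2: the spectral sequence and its identification with the HFPSS.} Applying $F({-},ER)$, or equivalently iterating the cofiber sequence of Step 1, produces the $x$-Bockstein tower, whose associated spectral sequence has $E_1 = (E^\ast X)[x]\Rightarrow ER^\ast X$. To identify it with the HFPSS from $E_2$ on, I would note that applying $F(X,{-})$ to the fixed points of \eqref{eq:rhocof}$\,\otimes\, E\bbR$ is the fixed points of applying $F(i^\ast X, {-})$ to \eqref{eq:rhocof}$\,\otimes\, E\bbR$ in the equivariant category (with $X$ given trivial action), so the $x$-Bockstein SS for $ER^\ast X$ is the fixed-point-spectrum incarnation of the $a_\sigma$-Bockstein SS for $\pi_\star F(i^\ast X, b(E\bbR))$ restricted to integer degrees; then \cref{lem:bockhfpss} applied to $F(i^\ast X, i_\ast i^\ast E\bbR) = b(E^\ast X\text{-valued coefficients})$ gives the comparison. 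Concretely it is cleanest to observe that the $x$-tower on $ER$ is obtained from the Borel/cellular filtration of $EC_{2+}$ and that the Borel cohomology spectral sequence for $E^\ast X$ with its $C_2$-action is, by construction, the HFPSS; the $d_1$ formula $d_1(a) = (a-\psi^{-1}(a))x$ and the $r=1$ Leibniz rule then follow from \cref{lem:hfpssd1} (specialized to trivial $n$, since $x$ has the ``$u_\sigma a_\sigma$'' bidegree, absorbing the sign), and the $\psi^{-1}$ drops out for $r\ge2$ because on $E_2 = H^\ast(C_2;-)$ the involution acts trivially.

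\emph{Step 3: the generating differentials.} For $X = S^0$ I would deduce the differentials directly from \cref{lem:erbock}: the $a_\sigma$-Bockstein SS for $\pi_\star E\bbR$ has differentials $d_{2^{n+1}-1}(u_\sigma^{2^n}) = u_n\ol u^{2^n-1}a_\sigma^{2^{n+1}-1}$, and is $E_0[\ol u^{\pm1},a_\sigma]$-linear. Passing to the integer-graded part and substituting $x = a_\sigma\ol u$, $u = u_\sigma\ol u$ — so that $u^{2^n} = u_\sigma^{2^n}\ol u^{2^n}$ and $x^{2^{n+1}-1} = a_\sigma^{2^{n+1}-1}\ol u^{2^{n+1}-1}$ — converts the equivariant differential into $d_{2^{n+1}-1}(u^{2^n}) = u_n x^{2^{n+1}-1}$ after using the Leibniz rule and that $\ol u$ (hence its integer-degree avatar) is a permanent cycle. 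That these generate all differentials, and $E_0$-linearity, are inherited from the corresponding facts in \cref{lem:erbock}.

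\emph{Expected main obstacle.} The genuinely delicate point is Step 1's identification of the fixed points of $a_\sigma\colon S^{-\sigma}\otimes E\bbR\to E\bbR$ with multiplication by $x$ on $ER$, together with the claim $(C_{2+}\otimes E\bbR)^{C_2}\simeq E$: this requires pinning down the periodicity class and checking it is a unit compatibly with the norm/transfer structure, and it is where even-periodicity (the extension of the $M\bbR$-orientation to $MP\bbR$) is really used. The rest is bookkeeping: translating between the $a_\sigma$-Bockstein and $x$-Bockstein indexings and tracking signs in the Leibniz rule, which is where \cref{lem:hfpssd1} does the work.
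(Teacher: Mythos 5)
Your proposal matches the paper's argument: the paper likewise obtains the cofiber sequence by rewriting \cref{eq:rhocof}$\,\otimes\, E\bbR$ via the invertibility of $\ol u$ and passing to fixed points, and then defers the remaining claims to \cref{lem:bockhfpss}, \cref{lem:hfpssd1}, and \cref{lem:erbock} exactly as you spell out. One small slip worth fixing: since $\ol u\in\pi_{1+\sigma}E\bbR$, the untwisting gives $\Sigma^{-\sigma}E\bbR\simeq\Sigma E\bbR$, not $\Sigma^{-1}E\bbR$; this is precisely what puts $\Sigma ER$ rather than $\Sigma^{-1}ER$ on the left of the cofiber sequence after fixed points, and it also untangles the ``$u_\sigma a_\sigma$'' comment (the relevant monomial for $x$ is $u_\sigma^{-1}a_\sigma$ times the permanent cycle $u$).
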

\begin{proof}
As $x = a_\sigma \ol{u}$ and $\ol{u}$ is invertible in $E\bbR$, \cref{eq:rhocof} implies that there is a cofiber sequence
\begin{center}\begin{tikzcd}
\Sigma E\bbR\ar[r,"x"]&E\bbR\ar[r]&C_{2+}\otimes E\bbR
\end{tikzcd}\end{center}
of $C_2$-spectra. Passing to fixed points yields the corresponding cofiber sequence for $ER$. The remaining facts follow from the previous lemmas.
\end{proof}

\begin{rmk}
The following depicts the $E_\infty$ page of the $x$-Bockstein spectral sequence for $ER$:

\begin{center}
\begin{tikzpicture}[scale = 0.21, font = \scriptsize]
\draw[step = 1cm, very thin,opacity = 0.1] (0,0) grid (60,32);
\draw [->] (0,0) -- (31.5,31.5);
\filldraw (0,0) circle (5pt) node [below] {$1$};
\filldraw (1,1) circle (5pt) node [right] {$\eta = u_1 x$};
\filldraw (2,2) circle (5pt);% node [right] {$\eta^2$};
\filldraw (3,3) circle (5pt) node [right, yshift=2.5pt] {$\nu = u_2x^3$};
\filldraw (6,6) circle (5pt);% node [right] {$\nu^2$};
\filldraw (7,7) circle (5pt) node [right, yshift=2.5pt] {$\sigma = u_3x^7$};
\filldraw (14,14) circle (5pt);% node [right] {$\sigma^2$};
\filldraw (30,30) circle (5pt) node [right, yshift=-2.5pt] {$\theta_4 = u_4 x^{30}$};

\draw[dashed,opacity=1](0,0.5)--(60,0.5);
\draw[dashed,opacity=1](0,2.5)--(60,2.5);
\draw[dashed,opacity=1](0,6.5) -- (60,6.5);
\draw[dashed,opacity=1](0,14.5) -- (60,14.5);
\draw[dashed,opacity=1](0,30.5) -- (60,30.5);

\node[align = left] at (-2,0) {$E_0$};
\node[align = left] at (-1.8,1.5) {$E_0/(2)$};
\node[align = left] at (-1.8,5) {$\dfrac{E_0}{(2,u_1)}$};
\node[align = left] at (-1.3,11) {$\dfrac{E_0}{(2,u_1,u_2)}$};
\node[align = left] at (-0.8,23) {$\dfrac{E_0}{(2,u_1,u_2,u_3)}$};

\filldraw (4,0) circle (3pt) node [below] {$2u^2$};
\filldraw (8,0) circle (3pt) node [below, xshift=3pt] {$(2,u_1)u^4$};
\filldraw (12,0) circle (3pt);
\filldraw (16,0) circle (3pt) node [below] {$(2,u_1,u_2)u^8$};
\filldraw (20,0) circle (3pt);
\filldraw (24,0) circle (3pt);
\filldraw (28,0) circle (3pt);
\filldraw (32,0) circle (3pt) node [below] {$(2,u_1,u_2,u_3)u^{16}$};
\filldraw (36,0) circle (3pt);
\filldraw (40,0) circle (3pt);
\filldraw (44,0) circle (3pt);
\filldraw (48,0) circle (3pt);
\filldraw (52,0) circle (3pt);
\filldraw (56,0) circle (3pt);

\draw (8,0) -- (10,2);
\draw [-|] (16,0) -- (22,6);
\draw (24,0) -- (26,2);
\draw [-|] (32,0) -- (46,14);
\draw (40,0) -- (42,2);
\draw (48,0) -- (54,6);
\draw (56,0) -- (58,2);

\filldraw (9,1) circle (5pt);% node [right] {$v_1^4\eta$};
\filldraw (10,2) circle (5pt);
\filldraw (17,1) circle (5pt);
\filldraw (18,2) circle (5pt);
\filldraw (25,1) circle (5pt);
\filldraw (26,2) circle (5pt);
\filldraw (33,1) circle (5pt);
\filldraw (34,2) circle (5pt);
\filldraw (41,1) circle (5pt);
\filldraw (42,2) circle (5pt);
\filldraw (49,1) circle (5pt);
\filldraw (50,2) circle (5pt);
\filldraw (57,1) circle (5pt);
\filldraw (58,2) circle (5pt);

\filldraw (20,4) circle (5pt) node [right] {$\overline{\kappa} = u_2^4 u^8x^4$};

\filldraw (44,12) circle (5pt);% node [right] {$\overline{\kappa}_2 = u_3^4 u^{16}x^{12}$};

\filldraw(54,6) circle (5pt);% node [right] {$v_2^8 \nu^2$};
\end{tikzpicture}
\end{center}

Here, the lines of slope $1$ depict $x$-towers. Everything in filtration $\geq 2^n-1$ is a module over $E_0/(u_0,\ldots,u_{n-1})$, and these regions are separated by dashed lines. The terms on the bottom describe the $0$-line $(\pi_\ast ER)/(x)$. For example, $(\pi_{16}ER)/(x) \subset \pi_{16} E \cong E_0\{u^8\}$ is the sub-$E_0$-module generated by $(2u^8,u_1u^8,u_2u^8)$; as $2u^8\cdot x = 0$ and $u_1u^8\cdot x^3 = 0$, the $x$-tower out of this is supported on $u_1u^8$ and $u_2u^8$, and on just $u_2u^8$ starting in filtration $3$. The solid circles in positive filtration indicate degrees where $\pi_\ast ER$ has nontrivial Hurewicz image, with some notable elements labeled (see \cref{sec:root}).

Similar charts appear in \cite[Section 6]{hahnshi2020real}.
\tqed
\end{rmk}

\section{Comparing \texorpdfstring{$b(ER)$ and $E\bbR$}{b(ER) and ER}}\label{sec:compare}

We are now in a position to consider \cref{thm:xicofib}. The first order of business is to identify $\xi = \rho\tau^{-2} u \in \pi_\sigma b(E)$ as a permanent cycle in the $x$-Bockstein spectral sequence for $\pi_\star b(ER)$. As $\rho\colon \pi_{\sigma}b(ER)\rightarrow \pi_0 b(ER) = ER^0 \bbR P^\infty$ is the inclusion of a summand, the fact that $\xi$ is a permanent cycle in the $x$-Bockstein spectral sequence for $\pi_\star b(ER)$ is predicted by Kitchloo--Wilson's computation of $ER(n)^\ast \bbR P^\infty$ \cite[Theorem 1.2]{kitchloowilson2008second}, see also \cite{kitchloolormanwilson2017landweber}. However, we take a different approach that sheds light on additional aspects of $\xi$.

Because the $x$-Bockstein spectral sequence for $\pi_\star b(ER)$ agrees with the HFPSS
\[
E_2 = H^\ast(C_2;\pi_\star b(E))\Rightarrow \pi_\star b(ER)
\]
from the $E_2$-page on, we can just as well work with the HFPSS in this section.

\begin{lemma}\label{lem:bpp}
We have
\[
\pi_\star b(E) = \frac{E_0[\rho,\tau^{\pm 2},u^{\pm 1}]_\rho^\wedge}{(\rho\cdot h)},
\]
and $C_2$ acts on $\pi_\star b(E)$ by the $E_0$-linear multiplicative involution $\psi^{-1}$ satisfying
\[
\psi^{-1}(\rho) = \rho,\qquad \psi^{-1}(u) = -u,\qquad \psi^{-1}(\tau^2) = \tau^2(h-1).
\]
In particular, $\xi$ is fixed under the action of $\psi^{-1}$.
\end{lemma}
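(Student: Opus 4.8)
The ring presentation $\pi_\star b(E)\cong E_0[\rho,\tau^{\pm 2},u]_\rho^\wedge/(\rho h)$ is \cite[Proposition 2.1.1]{balderrama2021borel}, so the plan is to concentrate on the involution. First I would pin down the $C_2$‑action: writing $b(\bs)=F(EC_{2+},i_\ast(\bs))$, which preserves limits, the identity $ER=E^{\h C_2}$ gives $b(ER)=b(E)^{\h C_2}$, where — since the $C_2$ implicit in $EC_{2+}$ acts trivially under the conjugation $C_2$ — the latter acts on $b(E)$ through complex conjugation $E\to E$ and the functoriality of $b$. Thus $\psi^{-1}$ is this action on $\pi_\star b(E)$; it is multiplicative, fixes $E_0=\pi_0E$ (conjugation acts by $(-1)^k$ on $\pi_{2k}E$), and is $\rho$‑adically continuous, hence determined by its values on $\rho$, $u$, $\tau^2$. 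I must also check the resulting formulas respect $\rho h=0$, which will be automatic once $\psi^{-1}(h)=h$ is known.

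The values on $\rho$ and $u$ are quick. The class $\rho=a_\sigma$ is the image of the Euler class in $\pi_{-\sigma}S_{C_2}$ under the unit map, and conjugation acts trivially on the sphere, so $\psi^{-1}(\rho)=\rho$. For $u$: by even‑periodicity $u=\iota^\ast\ol u$ for a unit $\ol u\in\pi_{1+\sigma}E\bbR$ (this is how $\ol u$ is produced in \cref{lem:erbock}), and restriction of a genuine class of degree $c+w\sigma$ to the underlying spectrum intertwines the conjugation action with precomposition by the $C_2$‑action on $\iota^\ast S^{c+w\sigma}\simeq S^{c+w}$, a self‑map of degree $(-1)^w$; taking $w=1$ gives $\psi^{-1}(u)=-u$. (Alternatively one may quote that conjugation acts on $\pi_{2k}MU$ by $(-1)^k$ and use Landweber exactness.)

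The heart of the matter is $\psi^{-1}(\tau^2)$, i.e.\ the conjugate Thom class of the complex sign representation; the relevant structural input is that complex conjugation carries the orientation of $E$ to the orientation of the dual line bundle, i.e.\ applies the inverse series $[-1](\bs)$ of the (degree‑zero) formal group law. First I would show $\psi^{-1}(z)=z$: $z$ is the first Chern class $c_1(\lambda_\bbC)$ of the complexified tautological real line bundle over $\bbR P^\infty=BC_2$, and $\lambda_\bbC$, being a complexification, is isomorphic to its conjugate, so $\psi^{-1}(z)=c_1(\overline{\lambda_\bbC})=c_1(\lambda_\bbC)=z$ (equivalently, $[2](z)=0$ makes $z$ a topologically nilpotent $2$‑torsion point, so $[-1](z)=z$). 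Next, conjugation sends $\tau^{-2}$ to the Thom class for the conjugate orientation, differing from $\tau^{-2}$ by a unit $v\in\pi_0b(E)$; by naturality from the universal line bundle and the change‑of‑orientation formula for Thom classes, $v$ is the power series $[-1](s)/s$ evaluated at the Euler class, corrected by the sign from $\psi^{-1}(u)=-u$, so $v=-\bigl([-1](s)/s\bigr)\big|_{s=z}$. It then remains to identify this with $h-1$ inside $\pi_0b(E)=E_0[[z]]/([2](z))$, using $[-1](z)=z$, $[2](z)=zh$ (hence $zh=0$), and the elementary facts $\operatorname{Ann}(z)=(h)$, $\operatorname{Ann}(h)=(z)$, $h(h-2)=0$: the naive power series differs from $h-1$ by a term in $(z)$, which is annihilated by $h$, leaving $v=h-1$. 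Since $\psi^{-1}(h)=h_0(\psi^{-1}(z))=h$, the relation $\rho h=0$ is respected, $\psi^{-1}(\tau^{-2})=(h-1)\tau^{-2}$, and $\psi^{-1}(\tau^2)=(h-1)^{-1}\tau^2=(h-1)\tau^2$ since $(h-1)^2=h(h-2)+1=1$. Finally $\psi^{-1}(\xi)=\rho\cdot(h-1)\tau^{-2}\cdot(-u)=-(h-1)\xi$; but $\xi h=\tau^{-2}u\cdot\rho h=0$, so $\xi(h-1)=-\xi$, and therefore $\psi^{-1}(\xi)=\xi$.

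The step I expect to be the real obstacle is the identification of $v$. The soft information available ($\psi^{-1}(z)=z$, multiplicativity, restriction to the underlying) only determines $\psi^{-1}(\tau^2)$ modulo annihilator ideals such as $\operatorname{Ann}(\rho^2)$, which is too coarse to pin it down; one genuinely needs the orientation‑theoretic fact that conjugation dualizes line bundles. The safest way to make that rigorous is probably to reduce to the universal case $E\bbR=MP\bbR$ (or $M\bbR$) by Landweber exactness, where it is part of the standard Real‑oriented package of \cite{hukriz2001real}, and then transport along the $MP\bbR$‑orientation of $E\bbR$.
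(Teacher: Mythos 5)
Your proposal is correct and, modulo the fact that the paper's proof is literally a one‑line deferral to \cite[Proposition~2.1.1]{balderrama2021borel}, it is essentially the same argument spelled out: the ring presentation is quoted, and the involution is pinned down by identifying $\psi^{-1}$ as the conjugation action on $E$ acting functorially through $b(\bs)$, reading off $\psi^{-1}(\rho)=\rho$, $\psi^{-1}(u)=-u$ from equivariance of $\rho$ and of $\ol u\in\pi_{1+\sigma}E\bbR$, and extracting $\psi^{-1}(\tau^2)$ from the change‑of‑orientation formula for the Thom class of $\lambda_\bbC$ (reducing to $MP\bbR$ by Landweber exactness). The value added by your write‑up is that it makes the orientation‑theoretic content visible rather than hidden behind a citation.

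One step is stated in a way that does not actually close. After reaching $v=-\bigl([-1](s)/s\bigr)\big|_{s=z}$, you write that ``the naive power series differs from $h-1$ by a term in $(z)$, which is annihilated by $h$, leaving $v=h-1$.'' Knowing $v-(h-1)\in(z)=\operatorname{Ann}(h)$ is only one constraint, and by itself it does not force the difference to vanish — you already observed that soft constraints determine $\psi^{-1}(\tau^2)$ only modulo annihilator ideals. What you need, and what your listed facts do provide, is a second constraint: since $[-1](z)=z$ one has $z\cdot v=-[-1](z)=-z=z(h-1)$, so $v-(h-1)\in\operatorname{Ann}(z)=(h)$ as well, and then $v-(h-1)\in(z)\cap(h)=(zh)=0$ (using that $E_0$ is $2$‑torsion‑free, which holds here by Landweber exactness). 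Alternatively, and more directly, the identity $v=h-1$ is the elementary fact that
\[
[2](z)+[-1](z)-z=-\sum_{i,j\geq 1}a_{ij}[2](z)^i[-1](z)^j\in\bigl(z\,[2](z)\bigr),
\]
so that $\bigl([2](z)+[-1](z)-z\bigr)/z\in\bigl([2](z)\bigr)$, i.e.\ $-[-1](z)/z=[2](z)/z-1=h-1$ in $E_0[[z]]/([2](z))$, with no case analysis needed. Either fix closes the gap.
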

\begin{proof}
The structure of $\pi_\star b(E)$ is as described in \cite[Section 2.1]{balderrama2021c2}. That $\psi^{-1}$ fixes $\xi$ follows immediately.
\end{proof}

\begin{prop}\label{prop:xipc}
The class $\xi$ is a permanent cycle in the HFPSS for $\pi_\star b(ER)$, detecting a lift of $x$.
\end{prop}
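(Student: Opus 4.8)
My plan is to produce a lift of $\xi$ to $\pi_\sigma b(ER)$ by hand, as a Chern class built from the Real orientation of $E\bbR$, and then to read off its restriction. Since $b(ER) = F(EC_{2+}, i_\ast ER)$ coincides with its own Borel completion, the $x$-Bockstein spectral sequence for $\pi_\star b(ER)$ converges, so a class in cohomological filtration $0$ is a permanent cycle exactly when it lies in the image of the map $\pi_\sigma b(ER) \to \pi_\sigma b(E)$ induced by $b(ER) \to b(E)$. The representative $\xi = \rho\tau^{-2}u$ lies in filtration $0$ and is a $d_1$-cycle, as $\psi^{-1}$ fixes it by \cref{lem:bpp}; so it suffices to exhibit $\tilde\xi \in \pi_\sigma b(ER)$ whose image in $\pi_\sigma b(E)$ is $\xi$. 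The additional claim that $\xi$ \emph{survives to a lift of $x$} is then the assertion that $\res(\tilde\xi) = x$ under $\res \colon \pi_\sigma b(ER) \to \pi_1 ER$, which is compatible with $\tilde\xi \mapsto \xi$ since $\res(\xi) = 0$ in $\pi_1 E = 0$ and $x$ lies in the kernel of $\pi_1 ER \to \pi_1 E$.

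The construction of $\tilde\xi$ rests on the identification $\pi_\sigma b(ER) = \widetilde{ER}^0(\bbR P^\infty)$, coming from $(S^\sigma \wedge EC_{2+})/C_2 \simeq P_1 \simeq \Sigma^\infty \bbR P^\infty$, rewritten via the adjunction between inflation and fixed points as the $C_2$-equivariant $E\bbR$-cohomology group $\widetilde{E\bbR}^0_{C_2}(\bbR P^\infty)$ of $\bbR P^\infty$ with trivial $C_2$-action. The complexification $\sigma_\bbC = \sigma \otimes_\bbR \bbC$ of the tautological line bundle, equipped with the Real structure given by fiberwise complex conjugation, is a Real line bundle over this trivial-$C_2$ space, so a Real orientation of $E\bbR$ supplies a first Chern class $c_1^{E\bbR}(\sigma_\bbC) \in \widetilde{E\bbR}^{1+\sigma}_{C_2}(\bbR P^\infty)$. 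Multiplying by the even-periodicity unit $\ol u \in \pi_{1+\sigma} E\bbR$ — which induces $\Sigma^{1+\sigma} E\bbR \simeq E\bbR$ — yields a class $\tilde\xi := \ol u \cdot c_1^{E\bbR}(\sigma_\bbC) \in \widetilde{E\bbR}^0_{C_2}(\bbR P^\infty) = \pi_\sigma b(ER)$.

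Two compatibilities then remain, both of which I expect to be routine. First, the image of $\tilde\xi$ in $\pi_\sigma b(E) = \widetilde{E}^0(\bbR P^\infty)$ is $\xi$: the underlying spectrum $E$ is complex oriented compatibly with the Real orientation, so $c_1^{E\bbR}(\sigma_\bbC)$ forgets to the complex Euler class $e^E(\sigma_\bbC) = \rho\tau^{-2}$ while $\ol u$ forgets to $u$, and $u \cdot e^E(\sigma_\bbC) = \rho\tau^{-2}u = \xi$. Second, $\res(\tilde\xi) = x$: the map $\res\colon \pi_\sigma b(ER) \to \pi_1 ER$ is pullback along the bottom-cell inclusion $S^1 = \bbR P^1 \hookrightarrow \bbR P^\infty$, along which $\sigma_\bbC$ pulls back to the complexified Möbius bundle, classified equivariantly by the inclusion $(S^{1+\sigma})^{C_2} = S^1 \hookrightarrow \bbC P^1_{C_2} = S^{1+\sigma}$; since this inclusion represents $\rho = a_\sigma$ and the Real orientation restricts on $\bbC P^1_{C_2}$ to the canonical generator, $c_1^{E\bbR}(\sigma_\bbC)$ restricts to $a_\sigma$, and so $\res(\tilde\xi) = \ol u \cdot a_\sigma = x$ by the definition of $x$ in \cref{lem:cofib}.

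The genuinely delicate point is the bookkeeping underlying the second paragraph: setting up the identification $\pi_\sigma b(ER) \cong \widetilde{E\bbR}^0_{C_2}(\bbR P^\infty)$, and the analogous one for $b(E)$ used in the first verification, carefully enough to track degrees and to locate where the unit $\ol u$ enters. Once this is in place the construction of $\tilde\xi$ and the two verifications are formal manipulations with Real Chern classes and restriction maps; in particular the argument uses only the Real orientation of $E\bbR$ together with its even periodicity, so it applies to $2$-periodic $ER(n)$ despite the partial multiplicativity discussed in the introduction.
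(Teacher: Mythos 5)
Your proof is correct, and it takes a genuinely different route from the paper's. You realize $\pi_\sigma b(ER)$ as $\widetilde{E\bbR}^0_{C_2}(\bbR P^\infty)$ with trivial $C_2$-action on $\bbR P^\infty$, and produce the lift $\tilde\xi$ explicitly as $\ol{u}$ times the Real Chern class of the Real line bundle $\sigma_\bbC$; the two verifications then reduce to the defining property of the Real orientation on $\bbC P^1_{C_2}\simeq S^{1+\sigma}$ and its compatibility with the underlying complex orientation of $E$. This is essentially the Chern-class route suggested by Kitchloo--Wilson's computation of $ER(n)^\ast\bbR P^\infty$, which the paper explicitly flags as an alternative in the paragraph preceding the proposition but deliberately does not take. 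The paper's proof instead reduces to $E = MP$ via the $MP\bbR$-orientation, and then uses the external squaring operation $\Sq\colon\pi_n MP\rightarrow\pi_{n(1+\sigma)}b(MP)$ arising from the $\bbE_\infty$-structure on $MP$ with its complex-conjugation $C_2$-action, together with the compatibility of power operations with the HFPSS from \cite{balderrama2022total} and the formula $\Sq(u)=\tau^{-2}u^2$ from \cite{andohopkinsstrickland2004sigma}. What the paper's approach buys is the supplementary identification $Q(x)=\xi x$, i.e.\ that $\xi$ detects the power operation $\Sq(x)$ (the ``additional aspects of $\xi$'' referred to in the text); your approach is more elementary in that it sidesteps power operations entirely and uses only the Real orientation and even periodicity. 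Note, though, that both arguments apply equally well to $2$-periodic $ER(n)$ despite the multiplicativity caveats, since the paper's proof only invokes the $\bbE_\infty$-structure on $MP$, never on $E$, before transferring the conclusion along the orientation.
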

\begin{proof}
By assumption, $E\bbR$ is $MP\bbR$-oriented. As $\xi = \rho \tau^{-2} u$ lifts to $\pi_\sigma b(MP)$ and $x$ lifts to $\pi_1 MPR$, it suffices to prove the proposition with $ER$ replaced by $MPR$.

As $MP$ is an $\bbE_\infty$ ring, and complex conjugation acts on $MP$ by $\bbE_\infty$ automorphisms, there is a $C_2$-equivariant external squaring operation
\[
\Sq\colon \pi_n MP\rightarrow \pi_{n(1+\sigma)}b(MP).
\]
As $\Sq$ is additive modulo transfers and $\rho$ annihilates the transfer ideal, the composite $\rho \cdot \Sq$ is additive and so induces a map
\[
Q\colon H^1(C_2;\pi_{n+1}MP)\rightarrow H^1(C_2;\pi_{n(1+\sigma)+1}b(MP))
\]
in group cohomology. By \cite[Theorem 1.0.1]{balderrama2022total}, if $a\in H^1(C_2;\pi_{n+1}MP)$ is a permanent cycle detecting $\alpha\in \pi_n MPR$, then $Q(a)$ is a permanent cycle weakly detecting $\Sq(\alpha)\in \pi_{n(1+\sigma)}b(MPR)$. Now recall that $x$ represents the generator of $H^1(C_2;\bbZ\{u\})\cong\bbZ/(2)\subset H^1(C_2;\pi_2 MP)$. The $\bbE_\infty$ structure on periodic cobordism is such that $\Sq(u) = \tau^{-2}u^2$; see for instance the paragraph after \cite[Lemma 4.3]{andohopkinsstrickland2004sigma}, noting that $u$ and $\tau^{-2}u^2$ are the periodic Thom classes for $\bbC$ and $\bbC[C_2]$ respectively. Thus $Q(x) = \xi x$, and it follows that $\xi x$ detects $\Sq(x)$. As $\Sq(x)$ lifts $x^2$, this is only possible if $\xi$ is a permanent cycle detecting a lift of $x$ as claimed.
\end{proof}

The following corollary is not needed for \cref{thm:xicofib}, but will be useful later on in understanding the structure of $\pi_\star b(ER)$. It is a direct analogue of \cite[Theorem 1.2]{kitchloowilson2008second}.

\begin{cor}\label{cor:integer}
In integer degrees, we have
\[
\pi_\ast b(ER) \cong ER_\ast[[z]]/([2](z)),
\]
where $z = \rho \xi$. In particular, $\pi_\star b(ER)$ is a module over $\pi_0 b(ER) \cong E^0 BC_2\cong E_0[[z]]/([2](z))$.
\end{cor}
\begin{proof}
The $x$-Bockstein spectral sequence for $\pi_\ast b(ER)$ takes the form
\[
E_1 = \pi_\ast b(E)[x]\Rightarrow \pi_\ast b(ER).
\]
Recall that
\[
\pi_\ast b(E) \cong E_\ast[[z]]/([2](z)),\qquad z = \rho \xi.
\]
As $\rho$ and $\xi$ are permanent cycles, so is $z$. Thus the differentials in the $x$-Bockstein spectral sequence for $\pi_\ast b(ER)$ are induced by those for $\pi_\ast ER$, leading to the given description of $\pi_\ast b(ER)$.
\end{proof}

We now relate $b(ER)$ and $E\bbR$. These live in the full subcategory $\Sp^{BC_2}\subset\Sp^{C_2}$ of Borel complete $C_2$-spectra, equivalent to the category of spectra with $C_2$-action. The functor
\[
b\colon \Sp\rightarrow\Sp^{BC_2},\qquad b(X) = F(EC_{2+},i_\ast X)
\]
is the diagonal, endowing a spectrum with the trivial $C_2$-action. In particular, it is left adjoint to the functor of homotopy fixed points, and if $X\in\Sp^{BC_2}$ then the counit of this adjunction gives a canonical map
\[
b(X^{\h C_2})\rightarrow X.
\]
Specializing to $X = E\bbR$, we have the following.

\begin{theorem}\label{prop:xicof}
The canonical map $b(ER)\rightarrow E\bbR$ fits into a cofiber sequence
\begin{equation}\label{eq:xic}
\begin{tikzcd}
\Sigma^\sigma b(ER)\ar[r,"\xi"]&b(ER)\ar[r]&E\bbR
\end{tikzcd}
\end{equation}
of $C_2$-spectra.
\end{theorem}
\begin{proof}
As $E\bbR$ is strongly even, we have $\pi_\sigma E \bbR = 0$. As the maps in \cref{eq:xic} are $b(ER)$-linear, their composite must be null. As $b(ER)$ and $E\bbR$ are $x$-complete, it then suffices to show that \cref{eq:xic} is a cofiber sequence after coning off $x$. As $b(ER)/(x)\simeq b(E)$ and $E\bbR/(x)\simeq C_{2+}\otimes i_\ast E$, \cref{eq:xic} with $x$ coned off takes the form
\begin{center}
\begin{tikzcd}
\Sigma^\sigma b(E)\ar[r,"\xi"]&b(E)\ar[r]&C_{2+}\otimes i_\ast E
\end{tikzcd},
\end{center}
which is a cofiber sequence as now $\xi = \rho\cdot \tau^{-2}u$ differs from $\rho$ by a unit.
\end{proof}

\cref{thm:xicofib} follows by combining \cref{prop:xipc} and \cref{prop:xicof}.

\section{The Bockstein spectral sequence}\label{sec:bock}

We now compute the $x$-Bockstein spectral sequence
\[
\pi_\star b(E)[x]\Rightarrow\pi_\star b(ER).
\]
We maintain notation from the introduction. In particular, recall that $h_n$ and $w_n$ are defined in terms of the $2$-series of $E$ by specializing
\begin{gather*}
[2](z) = z h_0(z),\qquad h_n(z)\equiv u_n + z^{2^n}h_{n+1}(z)\pmod{u_0,\ldots,u_{n-1}},\\
w_n = \rho^{2^{n+1}}h_{n+1} \equiv \tau^{2^{n+1}}u^{-2^n}(h_n-u_n) \pmod{u_0,\ldots,u_{n-1}}
\end{gather*}
to $z = \rho \xi= \rho^2\tau^{-2}u$. As with $u_n$, these classes are well defined modulo $(u_0,\ldots,u_{n-1})$. We begin by describing what will be the cycles and boundaries of the $x$-Bockstein spectral sequence for $\pi_\star b(ER)$. Let $Z_{2^{n+1}-1}\subset\pi_\star b(E)$ be the subring
\[
E_0(\rho,\xi,u^{\pm 2^{n+1}},\tau^{2^{\pm n+2}},\tau^{2^{i+2}l}u^{2^{i+1}k}u_i,\tau^{2^{i+1}(2l+1)}u^{2^{i+1}k}h_i:0\leq i \leq n;k,l\in\bbZ)_{(\rho,\xi)}^\wedge,
\]
and let $B_{2^{n+1}-1}\subset Z_{2^{n+1}-1}[x]$ be the ideal generated by
\[
\tau^{2^{i+2}l}u^{2^{i+1}k}u_i\cdot x^{2^{i+1}-1},\qquad \tau^{2^{i+1}(2l+1)}u^{2^{i+1}k}h_i\cdot x^{2^{i+1}-1},\qquad \tau^{2^{i+2}l}u^{2^{i+1}k}w_i\cdot x^{2^{i+1}-1}
\]
for $0\leq i \leq n$ and $k,l\in\bbZ$. We also declare $Z_0 = \pi_\star b(E)$ and $B_0 = (0)$, and for $2^{n+1}-1 \leq r < 2^{n+2}-1$ we write $Z_r = Z_{2^{n+1}-1}$ and $B_r = Z_{2^{n+1}-1}$. Thus there are inclusions
\[
0 = B_0 \subset B_1\subset B_2\subset\cdots \subset Z_2[x]\subset Z_1[x]\subset Z_0[x] = \pi_\star b(E)[x].
\]

\begin{theorem}\label{thm:xbock}
The $x$-Bockstein spectral sequence for $\pi_\star b(ER)$ supports differentials
\[
d_{2^{n+1}-1}(u^{2^n}) = u_n x^{2^{n+1}-1},\qquad d_{2^{n+1}-1}(\tau^{2^{n+1}}) = -w_n x^{2^{n+1}-1},
\]
and we may identify $Z_r[x]$ and $B_r$ as its $r$-cycles and $r$-boundaries.
\end{theorem}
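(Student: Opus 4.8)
The plan is to compute the $x$-Bockstein spectral sequence by hand in three stages: pin down the $d_1$-differential on ring generators, bootstrap to a short list of ``atomic'' higher differentials, and then identify the successive pages by an inductive commutative-algebra computation. Since the $x$-Bockstein spectral sequence agrees with the HFPSS $H^\ast(C_2;\pi_\star b(E))\Rightarrow\pi_\star b(ER)$ from the $E_2$-page on (\cref{lem:bockhfpss}), one works with whichever description is convenient, using the presentation of $\pi_\star b(E)$ and the involution $\psi^{-1}$ from \cref{lem:bpp}, the Leibniz rule from \cref{lem:hfpssd1} --- $d_r(\alpha\beta)=d_r(\alpha)\beta+\psi^{-1}(\alpha)d_r(\beta)$, with $\psi^{-1}$ omissible for $r\ge 2$ --- and the fact that $\rho$ and $\xi$ are permanent cycles ($\rho$ because it is pulled back from $\pi_{-\sigma}S_{C_2}$, whence $z=\rho\xi$ is one too; $\xi$ by \cref{prop:xipc}).

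For the $d_1$-differential, $\rho$ and $\xi$ are $d_1$-cycles since $\psi^{-1}$ fixes them, and \cref{lem:hfpssd1} combined with \cref{lem:bpp} gives (after the degree bookkeeping) $d_1(u)=u_0x$ and $d_1(\tau^2)=-w_0x$, the $n=0$ cases of the asserted differentials. The propagation mechanism is worth isolating: the boundaries $u_i x^{2^{i+1}-1}$, $w_i x^{2^{i+1}-1}$ produced by the differentials --- already $u_0x$, $w_0x$ from $d_1$ --- combine via the Leibniz rule with the identities $z\,h_0(z)=[2](z)=0$ and $h_n(z)\equiv u_n+z^{2^n}h_{n+1}(z)\pmod{u_0,\dots,u_{n-1}}$ to force $u^{2^n}$ and $\tau^{2^{n+1}}$ to be cycles for $d_1,\dots,d_{2^{n+1}-2}$; for instance $d_1(\tau^4)=-w_0\tau^2h_0\,x$ vanishes because $h_0(h_0-u_0)=0$ in $\pi_0 b(E)$ (as $h_0-u_0$ is divisible by $z$ and $z\,h_0=0$).

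Next come the atomic higher differentials $d_{2^{n+1}-1}(u^{2^n})=u_n x^{2^{n+1}-1}$ and $d_{2^{n+1}-1}(\tau^{2^{n+1}})=-w_n x^{2^{n+1}-1}$; note that multiplicativity cannot produce these on its own, since $u^{2^{n-1}}$ and $\tau^{2^n}$ are already dead by page $2^{n+1}-1$. The first I would read off from the nonequivariant $x$-Bockstein spectral sequence for $ER$ (\cref{lem:cofib}) via the restriction map $\pi_\star b(ER)\to\pi_\ast ER$, a morphism of spectral sequences sending $u\mapsto u$, $x\mapsto x$; the kernel of restriction is the ideal of $\rho$-multiples, and one pins down the resulting $\rho$-divisible correction by a degree count together with $\xi h_0=0$. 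The $\tau$-differential is the genuinely equivariant ingredient; I would extract it from the $u$-differential using $z^{2^n}\tau^{2^{n+1}}=\rho^{2^{n+1}}u^{2^n}$ (so that $z^{2^n}d_{2^{n+1}-1}(\tau^{2^{n+1}})=\rho^{2^{n+1}}u_n x^{2^{n+1}-1}$), the relation $\rho h_0=0$ and its consequences $w_n=\rho^{2^{n+1}}h_{n+1}$ and the further identities among the $w_n$, $h_{n+1}$ recorded after \cref{thm:ber}, and the recursion for the $h_n$ to convert $u_n$ into $w_n$ modulo boundaries. Independently, both families are forced by comparing the outcome with the known $\pi_\star E\bbR$ of \cref{lem:erbock} through the cofiber sequence $\Sigma^\sigma b(ER)\xrightarrow{\xi}b(ER)\to E\bbR$ of \cref{thm:xicofib}, which should serve as the decisive check --- and, if the direct route to the $\tau$-family stalls, as the argument itself.

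With the atomic differentials in hand the Leibniz rule determines $d_r$ everywhere, and it remains to identify cycles and boundaries. I would prove by induction on $n$ that the $r$-cycles and $r$-boundaries for $2^{n+1}-1\le r<2^{n+2}-1$ are $Z_{2^{n+1}-1}[x]$ and $B_{2^{n+1}-1}$ --- equivalently, that performing $d_{2^{n+1}-1}$ on $Z_{2^n-1}[x]/B_{2^n-1}$ yields $Z_{2^{n+1}-1}[x]/B_{2^{n+1}-1}$ --- with the new generators $\tau^{2^{i+1}(2l+1)}u^{2^{i+1}k}h_i$ and $\tau^{2^{i+2}l}u^{2^{i+1}k}w_i$ surviving precisely because $\rho h_0=0$ forces $h_i$-divisibility among the cycles. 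Passing to the limit gives $E_\infty=Z[x]/B$; conditional convergence holds because $b(ER)$ is $a_\sigma$-complete ($\simeq F(EC_{2+},-)$), and the $(\rho,\xi)$-completion built into $Z$ makes the filtration complete, so the spectral sequence converges strongly. The main obstacle is the content spanning the last two paragraphs: establishing the $\tau$-family, which has no underlying counterpart and must be squeezed out of $\rho h_0=0$ and the chromatic recursion, and controlling the bookkeeping in the homology computation --- above all, verifying that the boundary ideal is generated by exactly the stated monomials, neither fewer nor more.
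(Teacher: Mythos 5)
Your proposal is correct and follows essentially the same strategy as the paper: it imports the $u^{2^n}$-differential from \cref{lem:cofib}, extracts the $\tau^{2^{n+1}}$-differential from the permanence of $\xi$ together with the Leibniz rule and the identity $z^{2^n}\tau^{2^{n+1}}=\rho^{2^{n+1}}u^{2^n}$ (the paper phrases this as expanding $0 = d_{2^{n+1}-1}(\xi^{2^n})$), and identifies cycles and boundaries by an inductive commutative-algebra argument (which the paper carries out after inverting $x$). The only non-essential differences are presentational: you bootstrap upward from $d_1$ whereas the paper treats the inductive step first and the base case last, and you offer comparison with $\pi_\star E\bbR$ via \cref{thm:xicofib} as a backup, which the paper's proof of this theorem does not use.
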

\begin{proof}
We proceed by induction, treating the inductive step first.

Let $n\geq 1$, and suppose we have verified $E_{2^n} \cong Z_{2^n-1}[x]/B_{2^n-1}$. In particular, $E_{2^n}$ is generated by the permanent cycles $\rho$ and $\xi$, the classes $\tau^{2^{i+2}l}u^{2^{i+1}k}u_i$ and $\tau^{2^{i+1}(2l+1)}u^{2^{i+1}k}h_i$ for $i < n$, and the classes $u^{\pm 2^{n}}$ and $\tau^{\pm 2^{n+1}}$. As the classes $\tau^{2^{i+2}l}u^{2^{i+1}k}u_i$ and $\tau^{2^{i+1}(2l+1)}u^{2^{i+1}k}h_i$ are $x^{2^n-1}$-torsion for $i < n$, having survived to the $E_{2^n}$-page they must be permanent cycles. It follows that the next differentials are determined by their effect on $u^{2^n}$ and $\tau^{2^{n+1}}$.

The differential $d_{2^{n+1}-1}(u^{2^n}) = u_n x^{2^{n+1}-1}$ follows from \cref{lem:cofib}. Now write $d_{2^{n+1}-1}(\tau^{2^{n+1}}) = \alpha \cdot x^{2^{n+1}-1}$. As $\xi$ is a permanent cycle, the Leibniz rule implies
\[
0 = d_{2^{n+1}-1}(\xi^{2^n}) = d_{2^{n+1}-1}(\rho^{2^n}\tau^{-2^{n+1}}u^{2^n}) = \rho^{2^n}(\tau^{-2^{n+2}}\alpha u^{2^n} + \tau^{-2^{n+1}}u_n)x^{2^{n+1}-1}.
\]
This is on the $E_{2^{n+1}-1} = E_{2^n}$-page, and so combines with our inductive hypothesis to imply
\[
z^{2^{n-1}}(u_n+\tau^{-2^{n+1}}u^{2^n}\alpha) \equiv 0 \pmod{u_0,\ldots,u_{n-1},z^{2^{n-1}}h_n}.
\]
This is only possible if
\[
u_n+\tau^{-2^{n+1}}u^{2^n}\alpha \equiv h_n\pmod{u_0,\ldots,u_{n-1}},
\]
and thus
\[
d_{2^{n+1}-1}(\tau^{2^{n+1}}) = \alpha x^{2^{n+1}-1} = \tau^{2^{n+1}}u^{-2^n}(h_n-u_n)x^{2^{n+1}-1}=w_nx^{2^{n+1}-1}
\]
as claimed.

To identify boundaries and cycles, observe that as a general property of the $x$-Bockstein spectral sequence, if we write $Z_r'[x]$ for its $r$-cycles then we have
\[
Z_r' = \ker\left(d_r\colon Z_{r-1}\rightarrow E_r\right) = \ker\left(d_r\colon Z_{r-1}\rightarrow E_r[x^{-1}]\right),
\]
i.e.\ to compute cycles it suffices to work in the $x$-inverted $x$-Bockstein spectral sequence, or equivalently the $x$-Bockstein spectral sequence with $x$ set to $1$. Our inductive hypothesis implies
\[
\frac{E_{2^n}}{(x-1)} \cong \frac{\left(E_0/(u_0,\ldots,u_{n-1})\right)[\rho,\xi,u^{\pm 2^n},\tau^{\pm 2^{n+1}}]_{(\rho,\xi)}^\wedge}{\left(\xi^{2^n}-\rho^{2^n}\tau^{-2^{n+1}}u^{2^n},\rho^{2^n}(u_n\tau^{2^{n+1}}+\rho^{2^{n+1}}h_{n+1}u^{2^n})\right)},
\]
and we have just produced the differentials
\[
d(u^{2^n}) = u_n,\qquad d(\tau^{2^{n+1}}) = \rho^{2^{n+1}}h_{n+1}.
\]
Thus $\ker(d)$ is generated over $E_0(\rho,\xi,u^{\pm 2^{n+1}},\tau^{\pm 2^{n+2}})_{(\rho,\xi)}^\wedge$ by $u_n\tau^{2^{n+1}} +  \rho^{2^{n+1}}h_{n+1}  u^{2^n}= \tau^{2^{n+1}}h_n$, and this leads to $Z_{2^{n+1}-1}' = Z_{2^{n+1}-1}$ as claimed. The identification of boundaries follows immediately.

The base case, concerning the $d_1$-differential and identification of the $E_2$-page, can be handled by considering $0 = d_1(\xi)$ just like the above, only taking into account the twist in the Leibniz rule for $d_1$ given in \cref{lem:cofib}. Alternately, one may just use the formula $d_1(a) = (a-\psi^{-1}(a))x$ given there, where the action of $\psi^{-1}$ is given in \cref{lem:bpp}.
\end{proof}

The ring $Z$ and ideal $B\subset Z[x]$ of the introduction may be identified as $Z = \bigcap_r Z_r$ and $B = \bigcup_r B_r$. Thus \cref{thm:ber} follows from \cref{thm:xbock} by letting $r \rightarrow\infty$.

\begin{rmk}
Although we have relied on the known computation of $\pi_\ast MPR$ in computing $\pi_\star b(MPR)$, this was not actually necessary: the proof of \cref{thm:xbock} gives an independent computation, as we now explain.

Note that no computation was needed to produce $x\in \pi_1 MP\bbR$ or prove $MP\bbR/(x)\simeq C_{2+}\otimes i_\ast MP$, as $x = a_\sigma \ol{u}$ where $\ol{u}$ generates the $n=1$ summand of $MP\bbR\simeq \bigoplus_{n\in\bbZ}\Sigma^{n(1+\sigma)}M\bbR$. Thus it suffices to describe the $x$-Bockstein spectral sequence $MP_0[u^{\pm 1},x]\Rightarrow MPR_\ast$. This is $MP_0$-linear by \cite[Proposition 2.27]{hukriz2001real}, which uses the theory of real orientations but not the computation of $\pi_\star M\bbR$. Thus it suffices to produce the differentials $d_{2^{n+1}-1}(u^{2^n}) = u_n x^{2^{n+1}-1}$. The differential $d_1(u) = 2x$ follows from the involution $\psi^{-1}(u) = -u$, so suppose inductively that we have computed $d_{2^{n+1}-1}(u^{2^n}) = u_n x^{2^{n+1}-1}$. 

Next note that no computation was needed in \cref{prop:xipc} to prove $\xi$ is a permanent cycle. The argument in \cref{thm:xbock} now applies to show $d_{2^{n+1}-1}(\tau^{2^{n+1}}) = -\rho^{2^{n+1}}h_{n+1} x^{2^{n+1}-1}$.

As in \cref{sec:compare}, there is a canonical map $q\colon b(MP^{\h C_2})\rightarrow F(EC_{2+},MP\bbR)$. Here, we write $MP^{\h C_2}$ and $F(EC_{2+},MP\bbR)$ instead of $MPR$ and $MP\bbR$ as the proof that $MP\bbR$ is Borel complete relies on knowledge of its $x$-Bockstein spectral sequence. The map $q$ fits into a diagram of cofiber sequences
\begin{center}\begin{tikzcd}[column sep=scriptsize]
\Sigma b(MP^{\h C_2})\ar[d,"q"]\ar[r,"x"]&b(MP^{\h C_2})\ar[r]\ar[d,"q"]&b(MP)\ar[d,"p"]\ar[r,"\partial"]&\Sigma^2 b(MP^{\h C_2})\ar[d,"q"]\\
\Sigma F(EC_{2+},MP\bbR)\ar[r,"x"]&F(EC_{2+},MP\bbR)\ar[r]&C_{2+}\otimes i_\ast MP\ar[r,"\partial'"]&\Sigma^2 F(EC_{2+},MP\bbR)
\end{tikzcd}.\end{center}
The $x$-Bockstein differential $d_{2^{n+1}-1}(\tau^{2^{n+1}}) = -\rho^{2^{n+1}}h_{n+1} x^{2^{n+1}-1}$ implies 
\[
\partial(\tau^{2^{n+1}}) = -\rho^{2^{n+1}}h_{n+1} x^{2^{n+1}-2}
\]
mod higher filtration, and as $p(\tau^2) = u_\sigma^2$ and $q(\rho) = a_\sigma$ it follows that
\begin{align*}
\partial'(u^{2^{n+1}}) &= \partial'(\ol{u}^{2^{n+1}}u_\sigma^{2^{n+1}}) = \ol{u}^{2^{n+1}}q(\partial(\tau^{2^{n+1}})) \\&
= \ol{u}^{2^{n+1}}q(-\rho^{2^{n+1}}h_{n+1} x^{2^{n+1}-2}) 
= \ol{u}^{2^{n+1}}a_\sigma^{2^{n+1}} u_{n+1} x^{2^{n+1}-2} = u_{n+1}x^{2^{n+2}-2}
\end{align*}
mod higher filtration. This gives the next $x$-Bockstein differential
\[
d_{2^{n+2}-1}(u^{2^{n+1}}) = u_{n+1} x^{2^{n+2}-1},
\]
completing the induction.
\tqed
\end{rmk}

We end this subsection with some observations about the structure of $\pi_\star b(ER)$.

\begin{prop}\label{cor:gap}
The $C_2$-spectrum $b(ER)$ has the following gap:
\[
\pi_{\ast\sigma-1}b(ER) = 0.
\]
\end{prop}
\begin{proof}
Declare the \textit{coweight} of a degree $c+w\sigma$ to be the quantity $c$, so that we are claiming $\pi_\star b(ER)$ vanishes in coweight $-1$. By \cref{thm:xbock}, $\pi_\star b(ER)$ is generated over the coweight $0$ classes $E_0(\rho,\xi)_{(\rho,\xi)}^\wedge$ by the class $x$ in coweight $1$ and the classes
\[
\tau^{2^{n+2}l}u^{2^{n+1}k}u_n,\qquad \tau^{2^{n+1}(2l+1)}u^{2^{n+1}k}h_n
\]
in coweights of the form $2^{n+1}t$. These classes are killed by $x^{2^{n+1}-1}$, and therefore cannot support long enough $x$-towers to reach coweight $-1$.
\end{proof}

\begin{prop}
If $E$ is $L_d$-local, then $b(ER)$ is $u^{\pm 2^{d+1}}$ and $\tau^{\pm 2^{d+1}}$-periodic. Moreover,
\[
x^{2^{d+1}-1} = 0,\qquad \rho^{2^d}x^{2^d-1} = 0,\qquad \xi^{2^d}x^{2^d-1} = 0
\]
in $Z[x]/B$.
\end{prop}
\begin{proof}
Recall that $E$ is $L_d$-local provided $u_d$ is invertible in $E_0/(u_0,\ldots,u_{d-1})$, or equivalently if the ideal $(u_0,\ldots,u_d)\subset E_0$ generates the entire ring. Thus as $u_i u^{\pm 2^{d+1}}$ is a permanent cycle for $i \leq d$, it follows that $u^{\pm 2^{d+1}}$ is also a permanent cycle. Likewise, as $u_i x^{2^{d+1}-1} = 0$ for $i\leq d$, it follows that $x^{2^{d+1}-1} = 0$.

Next, as $h_d(z) = u_d + O(z)$, it follows by Weierstrass preparation that $(u_0,\ldots,u_{d-1},h_d)\subset \pi_0 b(ER)\cong E^0 BC_2$ (see \cref{cor:integer}) generates the entire ring. As $u_i \tau^{\pm 2^{d+1}}$ for $i < d$ and $h_d \tau^{\pm 2^{d+1}}$ are permanent cycles, it follows that $\tau^{\pm 2^{d+1}}$ is a permanent cycle. Next, note that
\[
w_{d-1} = \rho^{2^d}h_d,\qquad \tau^{2^{d+1}}u^{2^d}w_{d-1} = \xi^{2^d}h_d.
\]
As $u_i x^{2^d-1} = 0$ for $i < d$, the identities $w_{d-1} x^{2^d-1} = 0$ and $\tau^{2^{d+1}}u^{2^d}w_{d-1}x^{2^d-1} = 0$ then imply $\rho^{2^d}x^{2^d-1} = 0$ and  $\xi^{2^d}x^{2^d-1} = 0$.
\end{proof}

\begin{rmk}\label{rmk:table}
The following tables may be helpful in getting acquainted with the general shape of $\pi_\star b(ER)$, and especially for visualizing the arguments in \cref{sec:tr} and \cref{sec:ext}:
\renewcommand{\arraystretch}{1.1}
\[
\arraycolsep=2pt
\begin{array}{|c|c|c|c|c|c|c|c|c|c|}
\hline
u_0,u_1 &\xi u_1,\xi h_2 &\xi^2u_1,\xi^2h_2 &\xi^3u_1,\xi^3h_2 &u_0,\xi^4h_2 &\rho^3u_1,\xi^5h_2 &\rho^2u_1,\xi^6h_2 &\rho u_1,\xi^7h_2 &u_0,u_1\\
\rho^8x^8,h_2 &\rho^7x^8 &\rho^6x^8 &\rho^5x^8 &\rho^4 x^8 &\rho^3x^8 &\rho^2x^8 &\rho x^8 &x^8\\
\hline

\rho^7x^7 &\rho^6x^7 &\rho^5x^7 &\rho^4x^7 &\rho^3x^7 &\rho^2x^7 &\rho x^7 &x^7 &\xi x^7\\
\hline

h&&h_1x^2&\xi h_1x^2&h&&&&h\\
\rho^6 x^6 & \rho^5x^6 &\rho^4x^6 &\rho^3x^6 &\rho^2x^6 &\rho x^6 &x^6 &\xi x^6&\xi^2x^6\\
\hline

&h_1 x& \xi h_1 x& & & & & & \rho h_1 x \\
\rho^5 x^5& \rho^4 x^5& \rho^3 x^5& \rho^2 x^5& \rho x^5& x^5& \xi x^5& \xi^2 x^5& \xi^3 x^5\\
\hline

u_0,h_1&\xi h_1&\xi^2h_1&\xi^3h_1&u_0&\rho^3 h_1&\rho^2h_1&\rho h_1&u_0,h_1\\
\rho^4x^4& \rho^3 x^4& \rho^2 x^4& \rho x^4& x^4& \xi x^4& \xi^2 x^4& \xi^3 x^4& \xi^4 x^4\\
\hline

\rho^3 x^3& \rho^2 x^3& \rho x^3& x^3& \xi x^3& \xi^2x^3& \xi^3x^3& \xi^4x^3& \xi^5 x^3\\
\hline

h & & & & h & & & & h \\
\rho^2x^2 & \rho x^2 & x^2 & \xi x^2 & \xi^2 x^2 & \xi^3 x^2 & \xi^4 x^2 & \xi^5 x^2 & \xi^6 x^2 \\
\hline

&&&&&&&&\rho u_1 x \\
\rho x & x & \xi x & \xi^2 x & \xi^3 x & \xi^4 x & \xi^5 x & \xi^6 x & \xi^7 x\\
\hline

&&&& u_0 & \rho^3 u_1 & \rho^2 u_1  & \rho u_1  & u_0,u_1\\
1 & \xi & \xi^2 & \xi^3 & \xi^4 & \xi^5 & \xi^6 & \xi^7 & \xi^8\\
\hline
\end{array}
\]
\[
\arraycolsep=1pt
\begin{array}{|c|c|c|c|c|c|c|c|c|c|}
\hline
\xi u_1,\rho^7h_2&\xi^2u_1,\rho^6 h_2& \xi^3 u_1,\rho^5 h_2 &u_0, \rho^4h_2& \rho^3u_1,\rho^3h_2& \rho^2u_1, \rho^2h_2&\rho u_1, \rho h_2&u_0,u_1\\
\xi x^8&\xi^2 x^7 & \xi^3 x^7 & \xi^4 x^8 & \xi^5 x^7 & \xi^6 x^8 & \xi^7 x^8 & \xi^8 x^8, h_2\\
\hline

\xi^2x^7&\xi^3 x^7 & \xi^4 x^7 & \xi^5 x^7 & \xi^6 x^7 & \xi^7 x^7 & \xi^8 x^7 & \xi^9 x^7\\
\hline

\rho h_1 x^2&x^2 h_1 & x^2 \xi h_1 & h &&&& h \\
\xi^3x^6&\xi^4 x^6 & \xi^5 x^6 & \xi^6 x^6 & \xi^7 x^6 & \xi^8 x^6 & \xi^9 x^6 & \xi^{10}x^6 \\
\hline

h_1x&x \xi h_1 & & &&&& x \rho h_1\\
\xi^4x^5&\xi^5 x^5 & \xi^6 x^5 & \xi^7 x^5 & \xi^8 x^5 & \xi^9 x^5 & \xi^{10}x^5 & \xi^{11}x^5 \\
\hline

\xi h_1&\xi^2 h_1 & \xi^3 h_1&u_0&\rho^3 h_1&\rho^2 h_1&\rho h_1&u_0, h_1\\
\xi^5x^4&\xi^6 x^4 & \xi^7 x^4 & \xi^8 x^4 & \xi^9 x^4 & \xi^{10}x^4 & \xi^{11}x^4 & \xi^{12}x^4 \\
\hline

&&&&&&&x^3\rho^3 u_2 \\
\xi^6x^3&\xi^7 x^3 & \xi^8 x^3 & \xi^9 x^3 & \xi^{10}x^3 & \xi^{11}x^3 & \xi^{12}x^3 & \xi^{13}x^3\\
\hline

\rho u_1 x^2&x^2 u_1 & x^2\xi  u_1 &x^2\xi^2 u_1 \text{/} x^2\rho^6 u_2 
& x^2\rho^5 u_2 & x^2 \rho^4 u_2 & x^2 \rho^3 u_2 & h, x^2\rho^2 u_2 \\
\xi^7x^2&\xi^8 x^2 & \xi^9 x^2 & h, \xi^{10}x^2  & \xi^{11}x^2 & \xi^{12}x^2 & \xi^{13}x^2 & \xi^{14}x^2\\
\hline

u_1x&x\xi u_1 & x \xi^2 u_1\text{/}x\rho^6 u_2 
& x\rho^5 u_2& x\rho^4 u_2 & x \rho^3  u_2 & x \rho^2  u_2 & x \rho  u_1, x \rho u_2 \\
\xi^8x&\xi^9 x & \xi^{10}x & \xi^{11}x & \xi^{12}x & \xi^{13}x & \xi^{14}x & \xi^{15}x \\
\hline

\xi u_1,\rho^7 u_2&\xi^2 u_1 , \rho^6 u_2 & \xi^3 u_1 , \rho^5 u_2 & u_0, \rho^4 u_2& \rho^3 u_1, \rho^3 u_2 & \rho^2 u_1, \rho^2 u_2 & \rho  u_1, \rho u_2 & u_0 ,  u_1\\
\xi^9&\xi^{10}&\xi^{11}&\xi^{12}&\xi^{13}&\xi^{14}&\xi^{15}&\xi^{16}, u_2\\
\hline
\end{array}
\]
\renewcommand{\arraystretch}{1}%
These describe generators of $\pi_{c+w\sigma}b(ER)$ as a module over $\pi_0 b(ER)\cong E^0 BC_2$ in coweight $0\leq c\leq 8$ and stem $0\leq c+w\leq 16$, the first table containing stems $0\leq c+w\leq 8$ and second $9\leq c+w\leq 16$. It is arranged by stem and coweight: the box at coordinate $(s,c)$ contains a list of generators for $\pi_{c+(s-c)\sigma}b(ER)$. For space reasons, we have omitted any $\tau^{2i}u^j$ terms. These may recovered by comparing degrees: for example, the box in coordinate $(8,5)$ has entries $\rho h_1 x$ and $\xi^3 x^5$, and this means that $\pi_{5+3\sigma}b(ER)$ is generated over $\pi_0 b(ER)$ by $\rho\cdot \tau^{-4}u^4h_1\cdot x$ and $\xi^3 x^5$.

The entry $x\xi^2u_1/x\rho^6u_2$ indicates that either $x\xi^2 u_1$ or $x \rho^6 u_2$ may be chosen as a generator, and likewise for $x^2\xi^2 u_1/x^2\rho^6 u_2$. This sort of choice also appears on the $0$-line: for example, in box $(5,0)$ one could replace $\rho^3 u_1$ with $\xi u_0$.

These tables assume that $E$ has sufficiently large height, say $E = MP$.
\tqed
\end{rmk}

\section{Transfers}\label{sec:tr}

Recall that there are cofiber sequences
\begin{equation}\label{eq:cofibs2}\begin{tikzcd}[row sep=tiny]
\Sigma^{-\sigma}b(ER)\ar[r,"\rho"]&b(ER)\ar[r]&C_{2+}\otimes i_\ast ER \\
\Sigma^\sigma b(ER)\ar[r,"\xi"]&b(ER)\ar[r]&E\bbR
\end{tikzcd}\end{equation}
of $C_2$-spectra. The first is a general cofiber sequence that exists for any $C_2$-spectrum, given that $C_{2+}\otimes b(ER)\simeq C_{2+}\otimes i_\ast ER$, and the second was shown in \cref{prop:xicof}. Here,
\[
\pi_\star(C_{2+}\otimes i_\ast ER) \cong \pi_\ast ER[u_\sigma^{\pm 1}],\qquad |u_\sigma| = 1-\sigma,
\]
and $\pi_\star E\bbR$ was described in \cref{sec:even}.

Associated to the cofiber sequences of \cref{eq:cofibs2} are boundary maps
\[
\tr(u_\sigma^{-1}\cdot\bs)\colon \pi_{\star+1-\sigma}(C_{2+}\otimes i_\ast ER)\rightarrow \pi_{\star} b(ER),\qquad \partial\colon \pi_{\star+1+\sigma} E\bbR\rightarrow \pi_\star b(ER).
\]
The first of these is the transfer for the $C_2$-spectrum $b(ER)$. Both are $\pi_\star b(ER)$-linear.

\begin{prop}\label{prop:trp}
The above transfer and boundary maps satisfy
\begin{align*}
\tr(u_\sigma^{-1}\cdot u_\sigma^{2^n(2k+1)}) &= \rho^{2^n-1}\tau^{2^{n+1}k}h_n x^{2^n-1}+O(x^{2^n}),\\
\partial(\ol{u}^{2^n(2k+1)}) &= \xi^{2^n-1}\tau^{-2^{n+1}k}u^{2^{n+1}k}h_nx^{2^n-1}+O(x^{2^n})
\end{align*}
for $n\geq 0$ and $k\in\bbZ$.
\end{prop}
\begin{proof}
The error terms are present just because $\tau^{2^{n+1}k}h_n$ and $\tau^{-2^{n+1}k}u^{2^{n+1}k}h_n$ have only been defined mod $x$, so we omit them in the proof.

First consider the case $n=0$. These claimed values are not hidden in the $x$-Bockstein spectral sequence, so it suffices to show that they hold after coning off $x$. After coning off $x$, the cofiber sequences \cref{eq:cofibs2} take the form
\begin{equation}\begin{tikzcd}[row sep=tiny]
\Sigma^{-\sigma}b(E)\ar[r,"\rho"]&b(E)\ar[r]&C_{2+}\otimes i_\ast E \\
\Sigma^\sigma b(E)\ar[r,"\rho\tau^{-2}u"]&b(E)\ar[r]&C_{2+}\otimes i_\ast E 
\end{tikzcd}.\end{equation}
In particular, $\partial(\ol{u}\alpha) = \tr(\alpha)$. By \cite[Remark 6.15]{hopkinskuhnravenel2000generalized}, the transfer $\tr\colon \pi_0 E \rightarrow E^0 BC_2 \cong \pi_0 b(E)$ satisfies $\tr(1) = h$. The proof is to observe that $h$ is the unique class which satisfies
\[
\rho\cdot h = 0,\qquad h\equiv 2 \pmod{\rho}.
\]
As $\tr$ and $\partial$ are $\pi_\star b(E)$-linear, we deduce
\[
\tr(u_\sigma^{-1}\cdot u_\sigma^{2n+1}) = \tau^{2n}\tr(1) = \tau^{2n}h,\qquad \partial(\ol{u}^{2n+1}) = \tau^{-2n}u^{2n}\partial(\ol{u}) = \tau^{-2n}u^{2n}h
\]
as claimed. The argument is essentially the same for $n\geq 1$. Observe that
\[
\rho^{2^n}\tau^{2^{n+1}k}h_n = \tau^{2^{n+1}k}w_{n-1},\qquad \xi^{2^n}\tau^{-2^{n+1}k}h_n = \tau^{-2^{n+1}(k+1)}u^{2^n}w_{n-1}.
\]
In particular $\rho^{2^n-1}\tau^{2^{n+1}k}h_nx^{2^n-1}$ and $\xi^{2^n-1}\tau^{-2^{n+1}k}u^{2^{n+1}k}h_nx^{2^n-1}$ generate the kernels of $\rho$ and $\xi$ in their respective degrees. As the kernels of $\rho$ and $\xi$ are generated by the images of $\tr$ and $\partial$, this gives the claimed values of $\tr$ and $\partial$ up to multiplication by a unit, which may then be ruled out by working in the universal case $E = MP$. 
\end{proof}

\pagebreak[1]

\section{Hidden extensions}\label{sec:ext}

We now turn our attention to hidden extensions. We begin with a general discussion.  Write $Z[x]/B$ for the $x$-adic associated graded of $\pi_\star b(ER)$, as computed in \cref{sec:bock}. In general, hidden extensions in the $x$-Bockstein spectral sequence arise from the failure of $\pi_\star b(ER)$ to be isomorphic to $Z[x]/B$, and especially for relations to fail to lift through the map
\begin{equation}\label{eq:edge}
\pi_\star b(ER) \rightarrow (\pi_\star b(ER))/(x)\cong Z \subset\pi_\star b(E).
\end{equation}
Recall that
\[
\pi_\star b(E) = \frac{E_0[\rho,\tau^{\pm 2},u^{\pm 1}]_\rho^\wedge}{(\rho\cdot h)}.
\]
This indicates that the simple indecomposable hidden extensions will be those $\rho$ and $\xi$-extensions lifting relations of the form
\begin{equation}\label{eq:rhoh}
\rho\cdot \tau^{2i} u^{j} h = 0,\qquad \xi\cdot \tau^{2i} u^{j} h = 0,
\end{equation}
where $i$ and $j$ are such that $\tau^{2i}u^j h \in Z$.

If a relation of this sort lifts to $\pi_\star b(ER)$, then necessarily the corresponding $\tau^{2i}u^jh$ is in the image of the transfer or boundary studied in the previous section. These classes are generally not in the image of the transfer or boundary, and so one knows from the start that the relations in \cref{eq:rhoh} generally lift to nontrivial hidden extensions in $\pi_\star b(ER)$.

One can use \cref{prop:trp} to compute some of these directly:
\[
\xi\cdot \tau^{2^{n+1}(2k+1)}h = \xi \cdot \tr(u_\sigma^{2^{n+1}(2k+1)}) = \tr(x u_\sigma^{-1}\cdot u_\sigma^{2^{n+1}(2k+1)}) = \rho^{2^{n+1}-1}\tau^{2^{n+1}k}h_{n+1}x^{2^{n+1}}
\]
by Frobenius reciprocity, and likewise
\[
\rho\cdot\tau^{-2^{n+1}(2k+1)}u^{2^{n+1}(2k+1)}h = \rho \cdot \partial(\ol{u}^{2^{n+1}(2k+1)+1}) = \xi^{2^{n+1}-1}\tau^{2^{n+2}k}u^{2^{n+2}k}h_{n+1}x^{2^{n+1}}.
\]
In general however, a more indirect approach is necessary. Consider the cofiber sequences
\begin{equation}\begin{tikzcd}[row sep=tiny]\label{eq:cofibs}
\Sigma^{-\sigma}b(ER)\ar[r,"\rho"]&b(ER)\ar[r,"p"]&C_{2+}\otimes i_\ast ER \\
\Sigma^\sigma b(ER)\ar[r,"\xi"]&b(ER)\ar[r,"q"]&E\bbR
\end{tikzcd}.\end{equation}
The long exact sequences associated to these imply that the image of $\rho$ is equal to the kernel of the forgetful map $p\colon\pi_{\star}b(ER)\rightarrow \pi_{|\star|}ER$, and that the image of $\xi$ is equal to the kernel of the canonical map $q\colon\pi_\star b(ER)\rightarrow \pi_\star E\bbR$. To find elements of these kernels, one looks for elements in $\pi_\star b(ER)$ that lift the relations $u_n x^{2^{n+1}-1} = 0$. This relation already holds in $\pi_\star b(ER)$, so we need only consider lifts involving the filtration-shifting identities $p(\xi) = x$ and $q(\rho) = a_\sigma$. In this way we focus our attention on those classes of the form
\begin{equation}\label{eq:imrhoxi}
\tau^{2^{n+2}k}u^{2^{n+2}l}h_{n+1}\xi^rx^s,\qquad \tau^{2^{n+2}k}u^{2^{n+2}l}h_{n+1}\rho^rx^s,
\end{equation}
where $r+s\geq 2^{n+1}-1$ and $r\geq 1$ and $s < 2^{n+2}-1$. By the preceding discussion, the former must be in the image of $\rho$ and the latter in the image of $\xi$, and when this is not the case in $Z[x]/B$ there must be a hidden extension making it so. If $r+s > 2^{n+2}-1$, then the witness to the classes in \cref{eq:imrhoxi} being in the image of $\rho$ or $\xi$ may be obtained by multiplying a smaller witness with some suitable power of $\rho$ or $\xi$ and $x$. Thus we are led to focus on the case where $r+s=2^{n+2}-1$. We will show that when $s$ is even, the necessary hidden extensions are exactly those lifting the relations in \cref{eq:rhoh}. First, a couple observations.

\begin{lemma}\label{lem:ext2}
Fix positive integers $r+s=2^{n+2}-1$ with $s$ even. Then the classes
\[
\tau^{2^{n+2}k}u^{2^{n+2}l}h_{n+1}\xi^rx^s,\qquad \tau^{2^{n+2}k}u^{2^{n+2}l}h_{n+1}\rho^rx^s
\]
are not in the image of $\rho$ or $\xi$ respectively in $Z[x]/B$, at least when $E = MP$.
\end{lemma}
\begin{proof}
Consider the first case. Suppose towards contradiction that
\[
\tau^{2^{n+2}k}u^{2^{n+2}l}h_{n+1}\xi^rx^s = \rho \alpha x^s
\]
for some $\alpha \in Z$. As the $x$-Bockstein spectral sequence has only odd differentials and $s$ is even, necessarily we can divide out by $x$ to obtain
\begin{equation}\label{eq:badrel}
(\tau^{2^{n+2}k}u^{2^{n+2}l}h_{n+1}\xi^r-\rho \alpha)x^{s-1} = 0.
\end{equation}
This means that $\tau^{2^{n+2}k}u^{2^{n+2}l}h_{n+1}\xi^r-\rho \alpha$ detects some class $\theta \in\pi_\star b(MPR)$ satisfying $\theta\cdot x^{s-1} = 0$. Write $p\colon \pi_\star b(MPR)\rightarrow \pi_{|\star|}MPR$ for the restriction. As $p(\xi) = x$, necessarily $p(\theta)$ is detected by $u^{2^{n+2}l}u_{n+1}x^r$. Thus
\[
0 = p(\theta\cdot x^{s-1})\equiv u^{2^{n+2}l}u_{n+1}x^{r+s-1}\pmod{x^{r+s}}
\]
in $\pi_\ast MPR$. As $r+s-1 < 2^{n+2}-1$, this is incompatible with the structure of the $x$-Bockstein spectral sequence for $\pi_\ast MPR$, a contradiction. The second case is identical, only instead using the map $b(MPR)\rightarrow MP\bbR$ in place of the restriction.
\end{proof}

\begin{lemma}\label{lem:ext1}
Suppose that $i$ and $j$ are such that $\tau^{2i}u^j h \in Z$. Then $\tau^{2i}u^j h$ generates the kernels of $\rho$ and $\xi$ in its degree of $Z[x]/B$ as a module over $\pi_0 b(ER)$.
\end{lemma}
\begin{proof}
The class $\tau^{2i}u^jh$ generates the kernels of $\rho$ and $\xi$ in $Z$, as this is the case in $\pi_\star b(E)$. Thus the lemma follows from the following observation: $Z[x]/B$ contains no $x$-divisible elements in the kernel of $\rho$ or $\xi$ in even degrees, that is in degrees of the form $c+w\sigma$ with both $c$ and $w$ even. Indeed, any $x$-divisible element in even degree and in a given filtration must be of the form $\alpha x^{2r}=0$ with $\alpha \in Z$ in even degree. As $\alpha$ is in even degree and $B$ is generated by classes of the form $w\cdot x^?$ with $w$ in even degree, relations $\rho\alpha x^{2r} = 0$ or $\xi\alpha x^{2r}=0$ are only possible if $\alpha x^{2r} = 0$ already, proving the lemma.
\end{proof}

We may now give the main theorem of this subsection.

\begin{theorem}\label{thm:rhoxiext}
There are extensions
\begin{align*}
\rho\cdot \tau^{2(2^{n+1}k-r)}u^{2^{n+1}(2l+1)}h &= \left(\tau^{2^{n+2}k}u^{2^{n+2}l}h_{n+1} \xi^{2r-1}+O(\rho)\right)x^{2^{n+2}-2r}+O(x^{2^{n+2}-2r+1})\\
\xi\cdot\tau^{2(2^{n+1}k+r)}u^{2(2^n(2l+1)-r)}h &= \left(\tau^{2^{n+2}k}u^{2^{n+2}l}h_{n+1} \rho^{2r-1}+O(\xi)\right)x^{2^{n+2}-2r}+O(x^{2^{n+2}-2r+1})
\end{align*}
for $n\geq 0$, $k,l\in\bbZ$, and $1\leq r \leq 2^{n+1}-1$.
\end{theorem}
\begin{proof}
It suffices to produce these extensions in the universal case $E = MP$. This ensures that the terms on the right are nonzero, so that these are nontrivial extensions. As discussed above, the cofiber sequences of \cref{eq:cofibs} show that the terms 
\[
\tau^{2^{n+2}k}u^{2^{n+2}l}h_{n+1} \xi^{2r-1}x^{2^{n+2}-2r},\qquad \tau^{2^{n+2}k}u^{2^{n+2}l}h_{n+1} \rho^{2r-1}x^{2^{n+2}-2r}
\]
must be in the image of $\rho$ and $\xi$ respectively. By \cref{lem:ext2}, this is not the case in $Z[x]/B$, so there must be hidden extensions making it so. In other words, there must be hidden extensions of the form
\begin{align*}
\rho\cdot \alpha &= \left(\tau^{2^{n+2}k}u^{2^{n+2}l}h_{n+1} \xi^{2r-1}+O(\rho)\right)x^{2^{n+2}-2r}+O(x^{2^{n+2}-2r+1}),\\
\xi\cdot \beta &= \left(\tau^{2^{n+2}k}u^{2^{n+2}l}h_{n+1} \rho^{2r-1}+O(\xi)\right)x^{2^{n+2}-2r}+O(x^{2^{n+2}-2r+1}),
\end{align*}
where $\alpha$ and $\beta$ are detected by classes in $Z[x]/B$ killed by $\rho$ and $\xi$ respectively. The error terms ensure that we do not need to pin down $\alpha$ and $\beta$ precisely, but only the $\pi_0 b(MPR)$-submodule of $Z[x]/B$ that they generate. By \cref{lem:ext1}, the extensions given in the theorem statement are the only possibilities in these degrees.
\end{proof}

\begin{rmk}
This leaves open the problem of finding witnesses to the classes of \cref{eq:imrhoxi} being in the image of $\rho$ or $\xi$ in the case where $r+s=2^{n+2}-1$ and $r$ is even. In some cases no hidden extension is necessary, for example
\begin{align*}
\rho^{2^{n+1}}h_{n+1}x^{2^{n+1}-1} &= w_n x^{2^{n+1}-1} = 0,\\
\xi^{2^{n+1}}h_{n+1}x^{2^{n+1}-1} &= \tau^{-2^{n+2}}u^{2^{n+1}}w_n x^{2^{n+1}-1} = 0.
\end{align*}
However, the general situation seems to be rather subtle. For example, for $h_2 \xi^2 x^5$ to be in the image of $\rho$, the only possibility is that $\rho \tau^{-4}u^4 h_1 x$ detects a class satisfying
\[
\rho\cdot \rho\tau^{-4}u^4h_1 x = h_2 \xi^2 x^5 + O(\rho).
\]
On the other hand, we have
\[
\rho^2\tau^{-4}u^4h_1 = \tau^{-4}u^4 w_0,\qquad \tau^{-4}u^4 w_0\cdot x = 0
\]
in $Z[x]/B$. This indicates the existence of a mixed extension along the lines of
\[
\rho^2\tau^{-4}u^4 h_1 = \tau^{-4}u^4 w_0 + h_1 \xi^2 x^4+O(\rho).
\]
Note that if $\theta \in \pi_\star b(ER)$ is detected by $\tau^{-4}u^4 w_0$, then so is $\theta + h_1 \xi^2 x^4$. Thus for such an extension to even be defined, one must specify some information about how one lifts elements from $Z$ to $\pi_\star b(ER)$, and these considerations are outside the scope of our investigation.
\tqed
\end{rmk}

\section{Some Mahowald invariants}\label{sec:root}

We end by giving some examples of computations within the ring $\pi_\star b(ER)$. Our examples will center around the following definition.

\begin{defn}\label{def:root}
Given a spectrum $A$, the \textit{$A$-based Mahowald invariant} is a partially defined and multivalued function
\[
R_A\colon \pi_\ast A \rightharpoonup \pi_\ast A,
\]
i.e.\ a relation on $\pi_\ast A$, defined as follows: given $y\in \pi_n A$ and $z\in \pi_{n+k}A$, we say $z\in R_A(y)$ if $z$ lifts to a class $\zeta\in \pi_\star b(A)$ such that $\rho^N y = \rho^{N+k}\zeta$ for $N\gg 0$, and moreover $k$ is as large as possible.
\tqed
\end{defn}

\begin{rmk}
There are natural maps $\pi_n A \rightarrow \pi_n A^{t C_2}$ and $\pi_{c+w\sigma} b(A)\rightarrow \pi_c A^{t C_2}$, and the condition $\rho^N y = \rho^{N+k}\zeta$ for $N\gg 0$ amounts to asking that $y = \zeta$ in $\pi_\ast A^{t C_2}$. When $A = S$, this construction recovers the classical Mahowald invariant, commonly called the root invariant. See \cite{mahowaldravenel1993root} for additional background, \cite{brunergreenlees1995bredon} for the relation to $C_2$-equivariant homotopy theory, which connects \cref{def:root} to other definitions, \cite{behrens2007some} for the state of the art in $S$-based Mahowald invariants at the prime $2$, \cite{quigley2019tmf} for further discussion of $A$-based Mahowald invariants with $A\neq S$, and \cite{lilormanquigley2022tate} for more information about spectra related to $ER^{t C_2}$.
\tqed
\end{rmk}

In \cite{lishiwangxu2019hurewicz}, Li--Shi--Wang--Xu prove that the Hurewicz image of real bordism detects the Hopf elements, Kervaire classes, and $\kappabar$ family. These are the elements in $\pi_\ast S$ detected on the $E_2$-page of the Adams spectral sequence by the classes $h_i$, $h_j^2$, and $g_{k+1}$ respectively; note there is no claimed relation between $h_i$ here and the elements $h_i$ in $\pi_\star b(E)$. These classes arrange into $\Sq^0$ families, i.e.\
\begin{equation}\label{eq:sq0}
\Sq^0(h_i) = h_{i+1},\qquad \Sq^0(h_j^2) = h_{j+1}^2,\qquad \Sq^0(g_{k+1}) = g_{k+2}.
\end{equation}
Informally, this means that they arise as iterated Mahowald invariants at the level of $\Ext$. Of course this cannot lift to the level of homotopy, as not all of these classes are permanent cycles; still, it is known that $\eta \in R_S(2)$, $\nu \in R_S(\eta)$, and $\sigma \in R_S(\nu)$, and it is conjectured that $\theta_{j+1} \in R_S(\theta_j)$ for $j\geq 3$ provided $\theta_{j+1}$ exists, see \cite[Proposition 2.4]{mahowaldravenel1993root}.

We can compute the iterated $MPR$-based Mahowald invariants of the classes $2$, $\theta_0=4$, and $\kappabar$, yielding an analogue of \cref{eq:sq0}. Our computation works just as well for $ER$ in a range depending on the height of $E$. First we need to know how $\kappabar$ sits inside $\pi_\star MPR$.

\begin{lemma}\label{lem:kappabar}
The class $\kappabar$ is detected by $MPR$, with Hurewicz image $u_2^4u^8x^4$.
\end{lemma}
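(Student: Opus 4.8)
The plan is to combine the known Adams-spectral-sequence description of $\kappabar$ with the computation of $\pi_\star M\bbR$ due to Hu--Kriz, following the strategy of Li--Shi--Wang--Xu \cite{lishiwangxu2019hurewicz}. Recall that $\kappabar$ is detected in the classical Adams spectral sequence by the class $g_1$ in $\Ext_{\calA}^{4,24}(\bbF_2,\bbF_2)$, so $|\kappabar| = 20$. The Hurewicz map for $MPR$ factors through the $C_2$-equivariant Hurewicz map for $M\bbR$ followed by passage to fixed points, so the real content is to identify the image of $\kappabar$ in $\pi_\star M\bbR$, or equivalently in $\pi_\star MPR$ after inverting $\ol u$. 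Since $\kappabar$ is a permanent cycle of Adams filtration $4$, its image lands in Adams filtration $\geq 4$; the point of \cite{lishiwangxu2019hurewicz} is that this image is in fact nonzero, detected by the appropriate monomial.

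First I would recall the structure of $\pi_\star M\bbR$: by Hu--Kriz \cite{hukriz2001real}, in the region relevant here it is a polynomial-type ring on the classes $\ol u$ (degree $1+\sigma$), $a_\sigma$ (degree $-\sigma$), and the generators $\ol r_i$, together with the $a_\sigma$-Bockstein/HFPSS differentials $d_{2^{n+1}-1}(u_\sigma^{2^n}) = u_n \ol u^{2^n-1} a_\sigma^{2^{n+1}-1}$ of \cref{lem:erbock}. Converting to $MPR$ via $u_\sigma = \ol u^{-1}u$ and $x = a_\sigma \ol u$, the class $u_2^4 u^8 x^4$ has underlying-spectrum degree $2\cdot 8 + 2\cdot 12 - 4 = 36$? --- here I would carefully do the bookkeeping: $|u_n| = 2(2^n-1)-$ wait, $u_n = u^{-(2^n-1)}v_n$ has degree $0$, $|u| = 2$, $|x| = 1$, so $|u_2^4 u^8 x^4| = 0 + 16 + 4 = 20$, matching $|\kappabar|$. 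Next I would locate the corresponding class in the Adams--Novikov or effective spectral sequence: $u_2^4$ corresponds to $v_2^4$ up to a unit, and under the map from the Adams--Novikov $E_2$-page (or via the $\alpha$-family / Miller--Ravenel--Wilson names) the element $v_2^4$-type monomial of the right degree is exactly the one detecting $\kappabar$. The cleanest route is to cite \cite[\S 3]{lishiwangxu2019hurewicz}: they show $\kappabar$ maps to a nonzero class in $\pi_{20}^{C_2} M\bbR$ whose image under the standard identification is $\ol r$-monomial corresponding to $v_2^4$; translating through $u_\sigma$ and the periodicity unit $u$ of $MPR$, and using the $x$-Bockstein differential $d_7(u^4) = u_2 x^7$ which forces the surviving representative of a $v_2^4$-class to carry four factors of $x$, yields $u_2^4 u^8 x^4$.

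The key steps, in order: (1) compute $|\kappabar| = 20$ and confirm the degree of $u_2^4 u^8 x^4$ matches; (2) recall from \cite{lishiwangxu2019hurewicz} that $\kappabar$ has nonzero Hurewicz image in $\pi_\star M\bbR$, detected in the slice/Adams filtration by the $v_2^4$-monomial; (3) using \cref{lem:erbock} and \cref{lem:cofib}, translate this $M\bbR$-statement into $\pi_\star MPR$: the forgetful map $\pi_\star MPR \to \pi_\ast MP$ sends $u_2^4 u^8 x^4 \mapsto 0$ since $x \mapsto 0$, consistent with $\kappabar$ being Adams-Novikov-filtration $\geq 1$ hence zero in $MU_\ast$, wait --- actually $\kappabar$ does die in $\pi_\ast MU$ since it has positive Adams--Novikov filtration, so we need the $x$-divisible representative; (4) pin down the exact power of $x$ and the powers of $u$, $\tau$ by matching weights ($\sigma$-degree) on both sides, exactly as in the weight-counting arguments of \cref{prop:ext} and \cref{prop:transfers}. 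The main obstacle is step (3)--(4): identifying precisely which monomial in $Z[x]/B$ (equivalently, in the $E_\infty$-page of the $x$-Bockstein spectral sequence) the class $\kappabar$ is detected by, since a priori there could be several monomials of degree $20$ and one must use the $\Sq^0$/transchromatic structure — namely that $\kappabar$ comes from $v_2^4$ under the relevant algebraic operation — together with the $d_7$-differential $d_7(\tau^4) = -w_1 x^7$ and $d_7(u^4) = u_2 x^7$ to see that the surviving lift of $u_2^4$ must be multiplied into the combination $u^8 x^4$ (and not, say, $\tau$-power alternatives, which are excluded by the weight constraint and by the requirement that the class be a genuine permanent cycle surviving to $E_\infty$). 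I would also need to check there is no hidden extension obscuring the detection, but since $u_2^4 u^8 x^4$ already has the correct degree and lies in the image of the Hurewicz map by \cite{lishiwangxu2019hurewicz}, and $\pi_{20} MPR$ has no room for a higher-filtration correction of the right degree, this is immediate.
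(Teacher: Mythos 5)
Your proposal ultimately rests on the same two ingredients as the paper — citing Li--Shi--Wang--Xu for the fact that $\kappabar$ is detected by Real bordism, and then identifying the detecting class — but the route you take to the identification is much longer than necessary. The paper's proof is a one-liner once you observe that $\pi_{20}MR$ is a \emph{single} copy of $\bbZ/2$, generated by $u_2^4u^8x^4$ (viewed inside $\pi_{20}MPR$). Given that, "the Hurewicz image is nonzero" already forces the image to be $u_2^4u^8x^4$: there is literally no other nonzero element in that degree, so no weight-counting, no tracking of $v_2^4$ through the slice filtration, and no discussion of which monomial survives the $x$-Bockstein differentials is required. Your final sentence ("$\pi_{20}MPR$ has no room for a higher-filtration correction") gestures at this but treats it as a closing sanity check rather than the crux of the identification; everything in your steps (3) and (4) about pinning down the monomial via $d_7(u^4)=u_2x^7$, the $\Sq^0$-structure, and ruling out $\tau$-power alternatives is simply subsumed by the computation of $\pi_{20}MR$. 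The paper also offers an alternate route to step (2) — the ring map $MR\to\TMF_0(3)$ together with Mahowald--Rezk — which you do not mention. Your approach would likely arrive at the correct answer, and your degree check $|u_2^4u^8x^4|=0+16+4=20$ is right (after the visible false start), but you should tighten the argument by leading with the structure of $\pi_{20}MR$ and dropping the elaborate monomial-hunting.
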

\begin{proof}
If $\kappabar$ is detected by $MPR$, then it is detected by $MR$. As $\pi_{20}MR = \bbZ/(2)\{u_2^4 u^8 x^4\}\subset\pi_{20}MPR$, it suffices just to show that $\kappabar$ is detected by $MR$, which was shown in \cite{lishiwangxu2019hurewicz}. Alternately, as there is a ring map $MR\rightarrow \TMF_0(3)$ \cite{hillmeier2017c2}, it suffices to show that $\kappabar$ is detected in the latter, and here one may appeal to \cite{mahowaldrezk2009topological}.
\end{proof}

We now abbreviate $R = R_{MPR}$.

\begin{theorem}
Define elements
\[
a_n = u_n x^{2^n-1}\in \pi_{2^n-1}MPR,\qquad b_m = u_{m+1}^4 u^{2^{m+2}}x^{2^{m+2}-4}\in\pi_{4(3\cdot 2^m-1)}MPR
\]
for $n\geq 0$ and $m\geq 1$, so that for example $a_0 = 2$ and $b_1 = \kappabar$. Then there are $MPR$-based Mahowald invariants
\[
a_{n+1} \in R(a_n),\qquad a_{n+1}^2 \in R(a_n^2),\qquad b_{m+1} \in R(b_m).
\]
\end{theorem}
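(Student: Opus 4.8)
The plan is to translate the Mahowald invariant into a $\rho$-divisibility question inside $\pi_\star b(MPR)$ and settle it using \cref{thm:ber} and \cref{thm:ext}. Unwinding \cref{def:root} and the remarks after it, for $\alpha\in\pi_n MPR$ one has $\res(\zeta)\in R(\alpha)$ precisely when $\zeta\in\pi_{n+k\sigma}b(MPR)$ maps to the image $\ol\alpha$ of $\alpha$ under $\pi_{n+k\sigma}b(MPR)\to\pi_{n+k\sigma}MPR^{tC_2}\xrightarrow{\ \rho^k\ }\pi_n MPR^{tC_2}$, with $k$ as large as possible. Here $\pi_\star MPR^{tC_2}=\pi_\star b(MPR)[\rho^{-1}]$ is computed by the $\rho$-inverted $x$-Bockstein spectral sequence with $E_1=\pi_\star b(MP)[\rho^{-1}][x]$; since $\rho$, hence $z=\rho\xi=\rho^2\tau^{-2}u$, is invertible, the relation $\rho\cdot h=0$ of \cref{lem:bpp} degenerates to $h_0(z)=0$, i.e.\ $[2](z)=0$. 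All three classes are defined over $MP$, so I would take $E=MP$; they are then nonzero in $Z[x]/B$ by \cref{thm:ber}, and indeed not $2$-divisible there, as $2=u_0$ annihilates everything in positive $x$-filtration.

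The crux is an algebraic identity. From $[2](z)=0$, the relation $[2](z)\equiv z^{2^n}h_n(z)\pmod{u_0,\dots,u_{n-1}}$, invertibility of $z$, and $h_n(z)\equiv u_n+z^{2^n}h_{n+1}(z)\pmod{u_0,\dots,u_{n-1}}$, one gets in $\pi_0 MPR^{tC_2}$ the congruence
\[
u_n\equiv -z^{2^n}h_{n+1}(z)\pmod{u_0,\dots,u_{n-1}}.
\]
Multiplying by $x^{2^n-1}$ and writing $z^{2^n}=\rho^{2^n}\xi^{2^n}$ gives $\ol{a_n}=\rho^{2^n}\zeta_n$ with $\zeta_n=-\xi^{2^n}h_{n+1}(z)x^{2^n-1}$ a permanent cycle of weight $2^n$, whose restriction is $-u_{n+1}x^{2^{n+1}-1}=a_{n+1}$ (the sign being immaterial since $a_{n+1}$ is $2$-torsion). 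The errors that arise all lie in ideals $(u_0,\dots,u_j)x^N$ with $N\ge 2^{j+1}-1$ for the indices $j$ involved, hence vanish by the $x$-Bockstein relations $u_ix^{2^{i+1}-1}=0$ of \cref{lem:cofib}. Squaring the congruence likewise yields a permanent cycle $\zeta_n^{(2)}$ of weight $2^{n+1}$ with $\ol{a_n^2}=\rho^{2^{n+1}}\zeta_n^{(2)}$ and restriction $a_{n+1}^2$. For $b_m=u_{m+1}^4u^{2^{m+2}}x^{2^{m+2}-4}$ one raises the congruence to the fourth power and additionally substitutes $u^{2^{m+2}}=z^{2^{m+2}}\rho^{-2^{m+3}}\tau^{2^{m+3}}$ to convert the surplus power of $u$ into $z$-divisibility; repackaging the resulting power of $z$ as $\rho^k\xi^k$ and absorbing the $\tau$- and $u$-units into a permanent cycle produces a lift of $\ol{b_m}$ of weight $3\cdot 2^{m+2}$ restricting to $b_{m+1}$.

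It then remains to check these weights are maximal, i.e.\ that $\ol{a_n}$, $\ol{a_n^2}$, $\ol{b_m}$ admit no lift of larger weight. A lift $\zeta'$ of $\ol\alpha=\rho^k\zeta$ at weight $k+1$ satisfies $\rho\zeta'-\zeta\in\ker(\pi_\star b(MPR)\to\pi_\star MPR^{tC_2})$, the image of the transfer (the first cofibering of \cref{eq:cofibs}), so $\zeta$ would be $\rho$-divisible modulo a transfer; applying the forgetful map $\res$, which kills the image of $\rho$ and on which the transfers of \cref{prop:transfers} are either visibly $\rho$-divisible (for $n\ge 1$) or $h_0$-multiples (with $\res h_0=2$), would force the restriction $a_{n+1}$ (resp.\ $a_{n+1}^2$, $b_{m+1}$) to be $2$-divisible, contrary to the first paragraph. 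Equivalently and more robustly, one checks directly from \cref{thm:ext} that the leading classes $u_{n+1}\xi^{2^n}x^{2^n-1}$, $u_{n+1}^2\xi^{2^{n+1}}x^{2^{n+1}-2}$, and the $b_m$-lift are not $\rho$-divisible — their $u_n$-, $\xi$-, and $x$-exponents do not match those of the targets $\tau^{\ast}u^{\ast}u_n\cdot\xi^{2^{n+1}-2s-1}x^{2s}$ of the hidden $\rho$-extensions or of their $\xi$-, $\rho$-, $x$-multiples. Either way $k$ is maximal, giving $R(a_n)=a_{n+1}$, $R(a_n^2)=a_{n+1}^2$, $R(b_m)=b_{m+1}$.

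The step I expect to be the main obstacle is the $b_m$ case together with this maximality check. For $b_m$ one must carry out the conversion of the surplus $u$-power into $z$-divisibility carefully enough — tracking how the differential $d(u^{2^{m+2}})=u_{m+2}x^{2^{m+3}-1}$ interacts with the $\tau$-periodicities — to be sure $\ol{b_m}$ is divisible by \emph{exactly} $\rho^{3\cdot 2^{m+2}}$ with genuine permanent-cycle quotient, and the maximality argument must be made fully precise (handling $\rho$-power torsion, not merely $\rho$-torsion); a careful reading of \cref{thm:ext} as a \emph{complete} list of hidden $\rho$-extensions is the essential tool for both.
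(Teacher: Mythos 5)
Your approach is in essence the paper's approach, repackaged in the Tate fixed points $\pi_\star MPR^{tC_2}$. The identity $u_n\equiv -z^{2^n}h_{n+1}(z)\pmod{u_0,\dots,u_{n-1}}$ is exactly the paper's relation $\rho^{2^n}\cdot u_nx^{2^n-1}=-\rho^{2^{n+1}}h_{n+1}\xi^{2^n}x^{2^n-1}$ read after inverting $\rho$, and your treatments of $a_n$, $a_n^2$ match the paper's iterated use of that relation. The two places you flag as worries are indeed where the substance lies, and your sketch as written does not close either.

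For $b_m$, the issue is not just careful bookkeeping but a genuine change of relation. Raising the $u_{m+1}$-congruence to the fourth power corresponds to applying the basic relation four times, but after three applications what remains is $u_{m+1}u^{2^{m+2}}\cdot\xi^{3\cdot 2^{m+1}}h_{m+2}^3$, and $u_{m+1}u^{2^{m+2}}$ is an indecomposable generator of $Z$ — $u_{m+1}$ alone is not a permanent cycle — so a fourth application is not legitimate. The paper handles this by invoking the full relation $\rho\cdot h=0$ expanded mod $u_0,\dots,u_m$ to trade $u_{m+1}u^{2^{m+2}}\xi^{2^{m+2}-1}$ for $\tau^{-2^{m+3}}u^{2^{m+3}}h_{m+2}\cdot\rho^{3\cdot 2^{m+1}}\xi^{2^{m+1}-1}$, and only then assembles the quotient. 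Your ``absorbing the $\tau$- and $u$-units into a permanent cycle'' elides exactly this step; worse, the naive repackaging $z^{3\cdot 2^{m+2}}\mapsto\rho^{3\cdot 2^{m+2}}\xi^{3\cdot 2^{m+2}}$ visibly produces only $\rho^{2^{m+2}}$ after cancelling the $\rho^{-2^{m+3}}$, so getting weight $3\cdot 2^{m+2}$ requires splitting $z^{3\cdot 2^{m+2}}$ asymmetrically — and then one must still verify that the resulting cofactor is a genuine permanent cycle, which is where the $\rho\cdot h=0$ manipulation is needed.

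For maximality, your first argument has a real gap, as you suspect: if $\zeta$ and $\zeta'$ lift $\ol\alpha$ in weights $k$ and $k+1$, the definition only gives $\rho^N(\zeta-\rho\zeta')=0$ for $N\gg 0$, i.e.\ $\zeta-\rho\zeta'$ is $\rho$-\emph{power} torsion, not $\rho$-torsion, and thus need not lie in the image of the transfer from \cref{eq:cofibs}; applying $\res$ to $\rho^N(\zeta-\rho\zeta')=0$ then gives $0=0$ rather than a $2$-divisibility constraint. One would need an a priori bound on $\rho$-power torsion (or an explicit degree count in $\pi_{n+(k+1)\sigma}b(MPR)$) to finish. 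To be fair, the paper is also brief here (``We cannot pull this class back any further''), so the missing step is comparable; but since you offer the transfer argument as the mechanism, be aware that as stated it proves less than is needed.
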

\begin{proof}
First consider $a_n$. As
\[
h_n\equiv u_n + \rho^{2^n}\xi^{2^n}h_{n+1}\pmod{u_0,\ldots,u_{n-1}},
\]
the relation $\rho^{2^n}h_n\cdot x^{2^n-1} = 0$ implies
\begin{equation}\label{eq:rp}
\rho^{2^n}\cdot u_n x^{2^n-1} = - \rho^{2^{n+1}}\cdot h_{n+1}\xi^{2^n}x^{2^n-1}.
\end{equation}
There are no further relations and $h_{n+1}\xi^{2^n}x^{2^n-1}$ lifts $u_{n+1}x^{2^{n+1}-1}=a_{n+1}$, yielding $a_{n+1} \in R(a_n)$. The case of $a_n^2$ is identical, only we must apply \cref{eq:rp} twice:
\[
\rho^{2^n}\cdot u_n^2 x^{2(2^n-1)} = -\rho^{2^{n+1}}\cdot h_{n+1}\xi^{2^n}u_nx^{2(2^n-1)} = \rho^{3\cdot 2^n}\cdot h_{n+1}^2 \xi^{2^{n+1}}x^{2(2^n-1)}.
\]

Now consider $b_m$. As $2^{m+2}-4\geq 2^{m+1}-1$ for $m\geq 1$, we may apply \cref{eq:rp} thrice to obtain
\begin{equation}\label{eq:rp2}
\rho^{2^{m+1}}\cdot u_{m+1}^4u^{2^{m+2}}x^{2^{m+2}-4} = \rho^{2^{m+3}}\cdot u_{m+1}u^{2^{m+2}}\cdot \xi^{3\cdot 2^{m+1}} h_{m+2}^3 x^{2^{m+2}-4}.
\end{equation}
At this point additional care is needed: we cannot apply \cref{eq:rp} again, as despite appearances $u_{m+1}u^{2^{m+2}}$ is indecomposable. Instead, the relation $\rho \cdot h = 0$ gives
\[
0 \equiv u_{m+1}\rho^{2^{m+2}-1}\tau^{-2^{m+2}+2}u^{2^{m+1}-1}+h_{m+2}\rho^{2^{m+3}-1}\tau^{-2^{m+3}+2}u^{2^{m+2}-1}\pmod{u_0,\ldots,u_m}
\]
in $\pi_\star b(MP)$, and thus
\begin{align*}
u_{m+1}u^{2^{m+2}}\cdot \xi^{2^{m+2}-1} &= u_{m+1}u^{2^{m+2}}\cdot \rho^{2^{m+2}-1}\tau^{-2^{m+3}-2}u^{2^{m+2}-1}\\
&\equiv \tau^{-2^{m+3}}u^{2^{m+3}}h_{m+3}\cdot \rho^{3\cdot 2^{m+1}}\xi^{2^{m+1}-1}\pmod{u_0,\ldots,u_m}.
\end{align*}
Substituting this into \cref{eq:rp2} yields
\[
\rho^{2^{m+1}}\cdot u_{m+1}^4u^{2^{m+2}}x^{2^{m+2}-4} = \rho^{7\cdot 2^{m+1}}\cdot \tau^{-2^{m+3}}u^{2^{m+3}}h_{m+2}^4\cdot \xi^{2^{m+2}}x^{2^{m+2}-4}.
\]
We cannot pull this class back any further. Thus, as $\tau^{-2^{m+3}}u^{2^{m+3}}h_{m+2}^4\cdot \xi^{2^{m+2}}x^{2^{m+2}-4}$ lifts $u_{m+2}^4u^{2^{m+3}}x^{2^{m+3}-4}=b_{m+1}$, we obtain $b_{m+1} \in R(b_m)$.
\end{proof}

\begingroup
\raggedright
\bibliography{refs}

\begin{thebibliography}{KLW18b}

\bibitem[AHS04]{andohopkinsstrickland2004sigma}
Matthew Ando, Michael~J. Hopkins, and Neil~P. Strickland.
\newblock The sigma orientation is an {$H_\infty$} map.
\newblock {\em Amer. J. Math.}, 126(2):247--334, 2004.

\bibitem[Ara79]{araki1979orientations}
Sh\^{o}r\^{o} Araki.
\newblock Orientations in {$\tau $}-cohomology theories.
\newblock {\em Japan. J. Math. (N.S.)}, 5(2):403--430, 1979.

\bibitem[{Bal}21]{balderrama2021c2}
William {Balderrama}.
\newblock {The $C_2$-equivariant $K(1)$-local sphere}.
\newblock {\em arXiv e-prints}, page arXiv:2103.13895, March 2021.

\bibitem[{Bal}22]{balderrama2022total}
William {Balderrama}.
\newblock {Total power operations in spectral sequences}.
\newblock {\em arXiv e-prints}, page arXiv:2205.07409, May 2022.

\bibitem[Ban10]{banerjee2010real}
Romie Banerjee.
\newblock {\em Real {J}ohnson-{W}ilson theories and non-immersions of
  projective spaces}.
\newblock ProQuest LLC, Ann Arbor, MI, 2010.
\newblock Thesis (Ph.D.)--The Johns Hopkins University.

\bibitem[Beh07]{behrens2007some}
Mark Behrens.
\newblock Some root invariants at the prime 2.
\newblock In {\em Proceedings of the Nishida Fest conferences on homotopy
  theory, Kinosaki, Japan, July 28--August 1 2003 and August 4--8, 2003.
  Dedicated to G\^or\^o Nishida on the occasion of his 60th birthday}, pages
  1--40. Coventry: Geometry \& Topology Publications, 2007.

\bibitem[BG95]{brunergreenlees1995bredon}
Robert Bruner and John Greenlees.
\newblock The {Bredon}-{L{\"o}ffler} conjecture.
\newblock {\em Exp. Math.}, 4(4):289--297, 1995.

\bibitem[GHIR20]{guillouhillisaksenravenel2020cohomology}
Bertrand~J. Guillou, Michael~A. Hill, Daniel~C. Isaksen, and Douglas~Conner
  Ravenel.
\newblock The cohomology of {$C_2$}-equivariant {$\mathcal{A}(1)$} and the
  homotopy of {${\rm ko}_{C_2}$}.
\newblock {\em Tunis. J. Math.}, 2(3):567--632, 2020.

\bibitem[HHR16]{hillhopkinsravenel2016nonexistence}
M.~A. Hill, M.~J. Hopkins, and D.~C. Ravenel.
\newblock On the nonexistence of elements of {K}ervaire invariant one.
\newblock {\em Ann. of Math. (2)}, 184(1):1--262, 2016.

\bibitem[HK01]{hukriz2001real}
Po~Hu and Igor Kriz.
\newblock Real-oriented homotopy theory and an analogue of the
  {A}dams-{N}ovikov spectral sequence.
\newblock {\em Topology}, 40(2):317--399, 2001.

\bibitem[HKR00]{hopkinskuhnravenel2000generalized}
Michael~J. Hopkins, Nicholas~J. Kuhn, and Douglas~C. Ravenel.
\newblock Generalized group characters and complex oriented cohomology
  theories.
\newblock {\em J. Amer. Math. Soc.}, 13:553--594, 2000.

\bibitem[HM17]{hillmeier2017c2}
Michael~A. Hill and Lennart Meier.
\newblock The {$C_2$}-spectrum {${\rm Tmf}_1(3)$} and its invertible modules.
\newblock {\em Algebr. Geom. Topol.}, 17(4):1953--2011, 2017.

\bibitem[HS20]{hahnshi2020real}
Jeremy Hahn and XiaoLin~Danny Shi.
\newblock Real orientations of {L}ubin-{T}ate spectra.
\newblock {\em Invent. Math.}, 221(3):731--776, 2020.

\bibitem[KLW17]{kitchloolormanwilson2017landweber}
Nitu Kitchloo, Vitaly Lorman, and W.~Stephen Wilson.
\newblock Landweber flat real pairs and {$ER(n)$}-cohomology.
\newblock {\em Adv. Math.}, 322:60--82, 2017.

\bibitem[KLW18a]{kitchloolormanwilson2018er2}
Nitu Kitchloo, Vitaly Lorman, and W.~Stephen Wilson.
\newblock The {$ER(2)$}-cohomology of {$B\mathbb{Z}/(2^q)$} and
  {$\mathbb{C}\mathbb{P}^n$}.
\newblock {\em Canad. J. Math.}, 70(1):191--217, 2018.

\bibitem[KLW18b]{kitchloolormanwilson2018multiplicative}
Nitu Kitchloo, Vitaly Lorman, and W.~Stephen Wilson.
\newblock Multiplicative structure on real {J}ohnson-{W}ilson theory.
\newblock In {\em New directions in homotopy theory}, volume 707 of {\em
  Contemp. Math.}, pages 31--44. Amer. Math. Soc., [Providence], RI, 2018.

\bibitem[KW07a]{kitchloowilson2007fibrations}
Nitu Kitchloo and W.~Stephen Wilson.
\newblock On fibrations related to real spectra.
\newblock In {\em Proceedings of the {N}ishida {F}est ({K}inosaki 2003)},
  volume~10 of {\em Geom. Topol. Monogr.}, pages 237--244. Geom. Topol. Publ.,
  Coventry, 2007.

\bibitem[KW07b]{kitchloowilson2007hopf}
Nitu Kitchloo and W.~Stephen Wilson.
\newblock On the {H}opf ring for {$ER(n)$}.
\newblock {\em Topology Appl.}, 154(8):1608--1640, 2007.

\bibitem[KW08a]{kitchloowilson2008second}
Nitu Kitchloo and W.~Stephen Wilson.
\newblock The second real {J}ohnson-{W}ilson theory and nonimmersions of
  {$RP^n$}.
\newblock {\em Homology Homotopy Appl.}, 10(3):223--268, 2008.

\bibitem[KW08b]{kitchloowilson2008secondii}
Nitu Kitchloo and W.~Stephen Wilson.
\newblock The second real {J}ohnson-{W}ilson theory and nonimmersions of
  {$RP^n$}. {II}.
\newblock {\em Homology Homotopy Appl.}, 10(3):269--290, 2008.

\bibitem[KW15]{kitchloowilson2015ern}
Nitu Kitchloo and W.~Stephen Wilson.
\newblock The {$ER(n)$}-cohomology of {$BO(q)$} and real {J}ohnson-{W}ilson
  orientations for vector bundles.
\newblock {\em Bull. Lond. Math. Soc.}, 47(5):835--847, 2015.

\bibitem[Lan68]{landweber1968conjugations}
Peter~S. Landweber.
\newblock Conjugations on complex manifolds and equivariant homotopy of {$MU$}.
\newblock {\em Bull. Amer. Math. Soc.}, 74:271--274, 1968.

\bibitem[LLQ22]{lilormanquigley2022tate}
Guchuan Li, Vitaly Lorman, and J.~D. Quigley.
\newblock Tate blueshift and vanishing for real oriented cohomology theories.
\newblock {\em Adv. Math.}, 411 A:51, 2022.
\newblock Id/No 108780.

\bibitem[Lor16]{lorman2016real}
Vitaly Lorman.
\newblock The real {J}ohnson-{W}ilson cohomology of {$\mathbb{CP}^\infty$}.
\newblock {\em Topology Appl.}, 209:367--388, 2016.

\bibitem[LSWX19]{lishiwangxu2019hurewicz}
Guchuan Li, XiaoLin~Danny Shi, Guozhen Wang, and Zhouli Xu.
\newblock Hurewicz images of real bordism theory and real {J}ohnson-{W}ilson
  theories.
\newblock {\em Adv. Math.}, 342:67--115, 2019.

\bibitem[MR93]{mahowaldravenel1993root}
Mark~E. Mahowald and Douglas~C. Ravenel.
\newblock The root invariant in homotopy theory.
\newblock {\em Topology}, 32(4):865--898, 1993.

\bibitem[MR09]{mahowaldrezk2009topological}
Mark Mahowald and Charles Rezk.
\newblock Topological modular forms of level 3.
\newblock {\em Pure Appl. Math. Q.}, 5(2):853--872, 2009.

\bibitem[Qui22]{quigley2019tmf}
J.~D. Quigley.
\newblock {{\(\mathrm{tmf}\)}}-based {Mahowald} invariants.
\newblock {\em Algebr. Geom. Topol.}, 22(4):1789--1839, 2022.

\end{thebibliography}
\bibliographystyle{alpha}
\endgroup
\end{document}